\newtheorem{theorem}{Theorem}
\newtheorem{lemma}{Lemma}
\newtheorem{corollary}{Corollary}
\newtheorem{assumption}{Assumption}
\newtheorem{definition}{Definition}
\newtheorem{proof}{Proof}
\newtheorem{example}{Example}
\newtheorem{remark}{Remark}
\renewenvironment{proof}{\begin{trivlist} \item[{ \bf Proof:}] }
	{~\hfill$\Box$ \end{trivlist} }
\newcommand{\He}{{\rm He}}
\newcommand{\rank}{{\rm rank}}
\newcommand{\R}{\mathbb{R}}
\newcommand{\B}{\mathcal{B}}
\newcommand{\K}{\mathbb{K}}
\newcommand{\Vb}{\bar{V}}
\newcommand{\Vp}{V_\perp}
\newcommand{\Pp}{\mathbb{P}}
\begin{document}
	
	\begin{frontmatter}

		\title{Stabilization of rank-deficient continuous-time switched affine systems} 
		\thanks[footnoteinfo]{Corresponding author Lucas N. Egidio. }
		
		\author[UCL]{Lucas N. Egidio}\ead{lucas.egidio@uclouvain.be},    
		\author[FEM]{Grace S. Deaecto}\ead{grace@fem.unicamp.br},    
		\author[UCL]{Rapha\"{e}l Jungers}\ead{raphael.jungers@uclouvain.be}               
		
		\address[UCL]{ICTEAM Institute, Universit\'{e} Catholique de Louvain, 1348, Louvain-la-Neuve, Belgium}  
		\address[FEM]{DMC - School of Mechanical Engineering, UNICAMP,\\ 13083-860, Campinas, SP, Brazil }  
		
		\begin{keyword}                      
			Switched affine systems, rank-deficient dynamic model,  stabilization, continuous-time domain
		\end{keyword}

		\begin{abstract}                         
			This paper treats the global stabilization problem of continuous-time switched affine systems {that have rank-deficient convex combinations of their dynamic matrices}. {For these systems}, the already known set of attainable equilibrium points has higher dimensionality than in the {full-rank case} due to the existence of what we define as singular equilibrium points. Our main goal is to design a state-dependent switching function to ensure global asymptotic stability of a chosen point inside this set with conditions expressed in terms of linear matrix inequalities. {For this class of systems, global exponential stability is generally impossible to be guaranteed.} Hence, the proposed switching function is shown to ensure {global asymptotic and} local exponential stability of the desired equilibrium point. The position control and the velocity control with integral action of a dc motor driven by an h-bridge fed via a boost converter are used for validation. This practical application example is composed of eight subsystems, and all possible convex combinations of the dynamic matrices are singular.
		\end{abstract}
		\vspace{-0.5cm}
	\end{frontmatter}
	
	\section{Introduction}	
	Switched affine systems constitute a generalization of switched linear systems where the existence of affine terms in the dynamic equations lets the system switch among different dynamic behaviors with distinct equilibria. Therefore, a switched affine system can be briefly defined as a family of affine vector fields, referred to as subsystems, along with a switching signal that selects one of them as active at each instant of time to define the time evolution of the state trajectories. For classical references on switched and hybrid systems, we refer to~\cite{liberzon_switching_2003,decarlo2000perspectives,sun2011stability}. 
	
	Among other particularities, the stabilization problem for switched affine systems has an additional difficulty compared to their linear counterpart: by properly orchestrating the switching, the presence of affine terms in the dynamic equations may allow the system to be stabilized to points in the state space that are not equilibrium points for any of the subsystems. Consequentially, the stabilization problem consists in finding a switching rule that not only yields exclusively stable trajectories but also guarantees asymptotic stability of a given point in the state-space, chosen by the designer. The stabilization of switched affine systems to desired references is the key point that allows, for instance, power converters to obtain steady outputs by switching among topologies that cannot generate these same outputs alone. See~\cite{deaecto2010switched} for some applications of switched affine systems in power electronics.
	
	In the continuous-time domain, when state-dependent switching functions are considered, asymptotic stability is generally linked to the existence of Filippov solutions (see~\cite{liberzon_switching_2003}), which allows the system trajectories to evolve according to an averaged dynamics over those of the available subsystems. Most stabilization results, such as~\cite{deaecto2010switched,bolzern2004quadratic,beneux2019adaptive,sanchez2019practical,hetel2013robust}, provide {sufficient} design conditions that impose the following constraint: there must exist a Hurwitz stable matrix that is a convex combination of the dynamic matrices of the subsystems. 
	
	{A few} works in the literature, however, have relaxed this condition {to some extent, allowing a larger class of switched affine systems to be stabilized}. In~\cite{hetel2014local}, the authors proposed a switching function that locally stabilizes switched affine systems without requiring a stable convex combination of the {dynamics of the subsystems}. In~\cite{scharlau2014switching}, based on a \textit{max-type} Lyapunov function, the authors presented a design procedure based on nonlinear inequalities of the Lyapunov function parameters, which also {avoids this constraint}. The case where integrators are appended to the switched affine system to improve performance under uncertainties was investigated in~\cite{beneux2018integral}, resulting in an augmented switched affine system with all dynamic matrices sharing a non-trivial nullspace and, therefore, having no Hurwitz stable convex combination of the {matrices of the subsystems}. In the same direction, the authors of~\cite{ndoye2019robust} provided a robust relay control with integral action for a buck converter ensuring local stability and null steady-state error of the output voltage without requiring a stable convex combination of the {dynamics of the subsystems}. 
	
	{In this paper, we develop design conditions that go beyond these works since {our conditions} ensure global asymptotic stability and not local as in~\cite{hetel2014local}. Moreover, they are based on linear matrix inequalities and, therefore, simpler to solve than the nonlinear inequalities of~\cite{scharlau2014switching}, and include the integral control proposed in~\cite{beneux2018integral} and~\cite{ndoye2019robust} as particular cases. These design conditions are obtained by investigating} the global asymptotic stabilization of what we define as \textit{singular equilibrium points}. These equilibrium points are related to a convex combination of the {matrices of the subsystems} that is rank-deficient and, therefore, are not linked to a Hurwitz stable matrix. We show that, associated with each of these particular dynamics, there may exist infinitely many equilibrium points that are located in a subspace of the state space. The main contribution of this paper is to provide  a set of linear matrix inequalities (LMIs) together with conditions on the affine terms	to design a switching rule able to ensure global asymptotic stability of a singular equilibrium point chosen by the designer. The conditions on the affine terms were not fully explored by the literature to date, although they are essential to ensure that the time derivative of the Lyapunov function is strictly negative definite. The proposed results {allow us} to globally stabilize the class of systems studied in~{\cite{beneux2018integral,ndoye2019robust}}, as {the structure with an integrator considered therein} represents a particular case in our formulation. Moreover, examples for which the conditions in~\cite{scharlau2014switching} fail to propose a solution can be handled directly without difficulty by the proposed control methodology. Additionally, we present a discussion on the convergence rate for this class of systems, where it is {argued} that \textit{global exponential} stability is impossible to be ensured in the general case. However, our results ensure that exponential convergence holds \textit{locally} around the chosen equilibrium point and we devise a method based on sum-of-squares programming to estimate the exponential convergence rate. {Illustrative examples highlight important aspects of the theory and a practical application concerning the position and velocity control with integral action of a dc motor demonstrates its usefulness and efficiency.}

	\textbf{Notation:} The notation used throughout {this paper} is standard. For real vectors or matrices, $(')$ refers to their transpose. For symmetric matrices, $(\bullet)$ denotes each of their symmetric blocks. 
	The sets of real and natural numbers are $\mathbb{R}$ and $\mathbb{N}$, respectively. 
	The set $\K=\{1, \cdots, N\}$ is composed of the $N$ first positive natural numbers. For any symmetric matrix, $X>(<)~0$ denotes a positive (negative) definite matrix. The trace of a matrix $X$ is denoted as ${\rm  tr}(X)$. The unit simplex is defined as $\Lambda = \{\lambda\in \mathbb{R}^N:\lambda_i\geq 0, \sum_{i \in \K} \lambda_i =1\}$. The convex combination of matrices $X_i, \forall i\in \K,$ is $X_\lambda = \sum_{i\in\K}\lambda_i X_i$, $\lambda\in \Lambda$. For a square matrix $X$, we define the operator $\He(X):=X+X'$. {The minimum and the maximum eigenvalues of a {symmetric real} matrix $X$ are $s_{\min}(X)$ and $s_{\max}(X)$, respectively. A matrix $[v_i]_{i\in \mathcal{I}}$ is constructed with column vectors $v_i$ by taking indices $i$ in increasing order from the discrete set $\mathcal{I}$.}

	\section{Problem formulation}
	Consider a switched affine system defined as
	\begin{equation}
	\dot{x}(t) = A_{\sigma(t)}x(t) +b_{\sigma(t)},\quad x(0)=x_0\label{eq:sys}
	\end{equation}
	where $x(t)\in\R^n$ is the state vector at a time instant $t\geq0$ and the pairs $(A_i,b_i),~i\in\K,$ define $N$ available subsystems. {The switching signal $\sigma(t)\in\K$ selects at each $t\geq0$ one of these subsystems as active and is the only input signal to the system.}
	Our goal is to design a switching rule $\sigma(t) = u(x(t))$  to ensure global asymptotic stability of an equilibrium point $x_e$ chosen inside the set 
	\begin{equation}
	X_e = \{x_e\in\R^n ~:~  A_\lambda x_e+b_\lambda=0, ~\lambda\in\Lambda \}\label{eq:xe_general}
	\end{equation}
	which contains all possible equilibria of~\eqref{eq:sys} when considering solutions $x(t)$ in the sense of Filippov~\cite[p.~13]{liberzon_switching_2003}.
	If for some $\lambda\in\Lambda$ we have that {the convex combination $A_\lambda$ is Hurwitz}, then there exists a correspondence from $\lambda$ to a single $x_e\in X_e$ given by $x_e = -A_\lambda^{-1}b_\lambda$ and many works in the literature have addressed global stabilization problem under this assumption~\cite{deaecto2010switched,bolzern2004quadratic,beneux2019adaptive,sanchez2019practical,hetel2013robust}. However, this problem is understudied when $A_\lambda$ is rank deficient for a given $\lambda\in\Lambda$. In this case, such $\lambda$ is either associated with no equilibrium point or with an equilibrium space, i.e., there exist infinitely many points $x_e\in X_e$ satisfying the linear equation in~\eqref{eq:xe_general}. For the latter case, we formally define these equilibria as follows. 
	
	\begin{definition}\label{def:singular_equilibrium}
		A point $x_e\in\R^n$ is called a \textit{singular equilibrium point} of the system~\eqref{eq:sys} if it satisfies
		$
		A_\lambda x_e + b_\lambda = 0 
		$
		{for} at least one $\lambda \in\Lambda$ such that $\det(A_\lambda)=0$.
	\end{definition}

	{Despite its connection with the results in~\cite{hetel2014local,scharlau2014switching,beneux2018integral,ndoye2019robust}, the global asymptotic stability of singular equilibrium points has received no prior particular attention from the scientific community to the best of our knowledge.  Therefore, its relation with these previous works will be discussed throughout this paper.}

	For some given $\lambda\in\Lambda$, assume that $\mathcal{N}=\{q\in\R^n ~:~A_\lambda q=0\}$ is the non-trivial nullspace of $A_\lambda$ with ${\rm dim}(\mathcal{N})=m>0$. Clearly, $A_\lambda$ is not a {\em Hurwitz} stable matrix nor a regular one. Consider matrices ${V}_\perp\in\R^{n \times m}$ and $\Vb\in\R^{n \times p}$, being $p=n-m$, where ${V}_\perp$ has columns forming an orthonormal basis for $\mathcal{N}$ and $\Vb$ has orthonormal columns that span the orthogonal complement of $\mathcal{N}$, i.e., the row space of $A_\lambda$. Consequently, the following identities are verified
	\begin{equation}
	\Vb'\Vp = 0,~~\Vb'\Vb=I,~~ {\Vp'}\Vp=I.\label{eq:identities}
	\end{equation}
	{These matrices can be efficiently obtained by, for instance, the singular value decomposition of $A_\lambda$.}
	Thus, a singular equilibrium point $x_e$ associated with $\lambda\in\Lambda$ can be decomposed as 
	\begin{equation}\label{eq:xe_singular1}
	x_e = \Vb\bar{x}_e + \Vp x_{e\perp}
	\end{equation}
	with $\bar{x}_e\in\R^{p}$ solution to the equation
	\begin{equation}
	A_\lambda \Vb\bar{x}_e+b_\lambda = 0,\label{eq:xe_singular2}
	\end{equation}
	and an arbitrary $x_{e\perp}\in\R^m$.  Whenever this equation is satisfied, the arbitrariness of $x_{e\perp}$ spans a set of singular equilibrium points associated to $\lambda$, which form an affine subspace in the state space passing through $\Vb\bar{x}_e$. It is important to remark that the expression in~\eqref{eq:xe_singular2} is developed from the necessary condition for attainable equilibrium points represented by the set defined in~\eqref{eq:xe_general}. Thus, even though~\eqref{eq:xe_singular2} defines an overdetermined system of equations, it is no more conservative than the classical necessary condition $A_\lambda x_e+b_\lambda=0$ for equilibrium points of switched affine systems. In other words, for a given $\lambda\in\Lambda$ such that $A_\lambda$ is singular, any solution $x_e$ to $A_\lambda x_e +b_\lambda=0$ can be decomposed as in~\eqref{eq:xe_singular1} to satisfy~\eqref{eq:xe_singular2}.

	{A relevant question within the scope of this work is whether the matrix $\Vb'A_\lambda \Vb$ is singular, as it will be important to obtain our design conditions. The following lemma provides an answer to this question.}

	\begin{lemma} \label{lemma:defective}
		Let a singular matrix $X\in\R^{n\times n}$ along with matrices  $\Vp\in\R^{n\times m}$ and $\Vb\in\R^{n\times p}$ where $p=n-m$ formed by orthonormal vectors spanning the nullspace and row space of $X$, respectively. The two statements are equivalent:
		\begin{enumerate}
			\item[(a)] $\Vb' X \Vb$ is singular.
			\item[(b)] $X$ is defective and zero is a defective eigenvalue.
		\end{enumerate} 
	\end{lemma}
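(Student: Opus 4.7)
My plan is to prove the equivalence by rephrasing condition (a) as an inclusion statement about nullspaces of powers of $X$, and then invoking the standard characterization of defective eigenvalues in terms of Jordan block sizes. Throughout, I would use the orthogonal decomposition $\R^n=\mathrm{span}(\Vb)\oplus\mathrm{span}(\Vp)$ together with $\mathrm{span}(\Vp)=\mathcal{N}(X)$ and $\mathrm{span}(\Vb)=\mathcal{N}(X)^\perp$.

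First, I would establish the intermediate claim that $\Vb'X\Vb$ is singular if and only if $\mathcal{N}(X)\subsetneq\mathcal{N}(X^2)$. For the forward direction, take $y\ne 0$ with $\Vb'X\Vb y=0$ and let $u=\Vb y$. Since $u\in\mathrm{span}(\Vb)=\mathcal{N}(X)^\perp$ and $u\ne 0$, in particular $u\notin\mathcal{N}(X)$, so $Xu\ne 0$. The relation $\Vb'(Xu)=0$ means $Xu$ is orthogonal to $\mathrm{span}(\Vb)$, hence $Xu\in\mathrm{span}(\Vp)=\mathcal{N}(X)$, giving $X^2u=0$. Thus $u$ witnesses the strict inclusion. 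For the converse, pick any $w\in\mathcal{N}(X^2)\setminus\mathcal{N}(X)$ and decompose $w=\Vb y+\Vp z$. Because $\Vp z\in\mathcal{N}(X)$, we have $Xw=X\Vb y$, so $\Vb y\notin\mathcal{N}(X)$, giving $y\ne 0$. Moreover, $X(\Vb y)=Xw\in\mathcal{N}(X)=\mathrm{span}(\Vp)$, hence $\Vb'X\Vb y=0$, proving singularity.

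The final step is to recall that zero is a non-defective eigenvalue of $X$ exactly when its geometric and algebraic multiplicities coincide, which is equivalent to every Jordan block associated with the zero eigenvalue being of size one, and in turn equivalent to $\mathcal{N}(X)=\mathcal{N}(X^k)$ for all $k\ge 1$; it is standard that this chain stabilizes if and only if it is already stable at $k=2$, i.e., $\mathcal{N}(X)=\mathcal{N}(X^2)$. Conversely, a strict inclusion $\mathcal{N}(X)\subsetneq\mathcal{N}(X^2)$ produces a generalized eigenvector attached to the eigenvalue $0$ that is not itself an eigenvector, which is precisely the definition of $0$ being a defective eigenvalue (and a fortiori of $X$ being a defective matrix). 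Combining this with the intermediate claim closes the equivalence.

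The only subtle point I anticipate is justifying why the nullspace stabilization test at $k=2$ suffices, but this follows immediately from the flag structure $\mathcal{N}(X)\subseteq\mathcal{N}(X^2)\subseteq\cdots$: if a new vector appears at level $k+1$, a single application of $X$ produces a new vector at level $k$, so the first strict inclusion (if any) must occur between $\mathcal{N}(X)$ and $\mathcal{N}(X^2)$. This is the only slightly nonroutine ingredient; the rest of the argument is essentially a careful bookkeeping of the orthogonal decomposition induced by $[\Vb\;\Vp]$.
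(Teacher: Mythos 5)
Your proof is correct and follows essentially the same route as the paper: both arguments pivot on the equivalence of (a) with the strict growth of the nullspace from $X$ to $X^2$ (the paper phrases this as $\rank(X^2)<\rank(X)$ and verifies it through the identity $X^2=\Vb\Vb'X\Vb\Vb'X+\Vp\Vp'X\Vb\Vb'X$, while you argue directly with vectors), and both then identify that condition with zero being a defective eigenvalue via the Jordan structure. The only cosmetic difference is that you prove the stabilization-at-$k=2$ fact from the flag structure, where the paper cites it from Horn and Johnson.
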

	The proof is given in Appendix~\ref{app:proof_defective}. %
	From this lemma, we have that $\Vb' A_\lambda \Vb$ is singular if and only if zero is a defective eigenvalue of $A_\lambda$.   {In the context of the stability conditions to be presented in this paper,} the following assumption is considered.
	\begin{assumption}\label{aspt:regular}
		For $\lambda\in\Lambda$ associated with a given point $x_e$, zero is not a defective eigenvalue of $A_\lambda$, i.e., its algebraic and geometric multiplicities are the same.
	\end{assumption}
	
	{It is important to discuss whether this assumption is a restrictive condition.  Despite not holding for $\lambda$ such that $A_\lambda$ is a defective dynamic matrix {with zero as a defective eigenvalue}, e.g., a double-integrator,  Assumption~\ref{aspt:regular} is still fulfilled for many other systems of interest in the context of switched affine systems{, as it will be illustrated in our examples. Also, note that if one takes $A_\lambda$ randomly out of the set of singular matrices, the probability that it has repeated eigenvalues is {already} {zero}, which suggests that only a few particular cases are excluded by this assumption. However, a formal discussion in the sense of the measure of these sets is left for future work.}
	}
	
	{Finally, the main problem can be stated as: design a state-dependent switching rule $\sigma(t)=u(x(t))$ for system~\eqref{eq:sys} such that a chosen singular equilibrium point $x_e$, as given in Definition~\ref{def:singular_equilibrium}, is \textit{globally asymptotically stable}. In the following sections, we present sufficient conditions to achieve this goal.}
	
	\section{Stability conditions}\label{sec:stability}
	In this section, we present sufficient conditions to design a stabilizing state-dependent switching function $\sigma(t)=u\big(x(t)\big)$. Let us adopt the auxiliary state variable $\xi(t)= x(t)-x_e$, for which the error dynamics is
	\begin{equation}\label{eq:err_sy}
	\dot{\xi}(t) = A_{\sigma(t)}\xi(t) + \ell_{\sigma(t)}, \qquad\xi(0) = \xi_0:=x_0-x_e
	\end{equation}
	with $\ell_i = A_i x_e + b_i, ~\forall i\in \K$, and 
	a candidate Lyapunov function 
	\begin{equation}\label{eq:lyap}
	v(\xi) = \xi'\begin{bmatrix}
	\Vb&\Vp
	\end{bmatrix}\begin{bmatrix}
	\bar{P} & P_\times\\
	\bullet & P_{\perp}
	\end{bmatrix}\begin{bmatrix}
	\Vb&\Vp
	\end{bmatrix}'\xi.
	\end{equation} Decomposing the state variable as 
	\begin{equation}
	\xi =\Vb\bar{\xi} + \Vp \xi_\perp\label{eq:decomposition_xi}
	\end{equation}
	 we can also  define 
	\begin{align}
	f_{i}(\xi) &=\left.\frac{\partial v}{\partial \xi}'\dot{\xi}\right|_{\sigma=i} \nonumber\\
	&=\begin{bmatrix}
	\bar{\xi}\\\xi_{\perp} \\ 1
	\end{bmatrix}'\!\!\begin{bmatrix}
	\He\left(\bar{S}A_{i} \Vb \right) & \bullet & \bullet\\
	U_i' & \He\left(S_\perp A_{i} {V}_\perp \right)  &\bullet\\
	\ell_i'\bar{S}' & \ell_i'S_\perp'& 0
	\end{bmatrix}\!\!\begin{bmatrix}
	\bar{\xi}\\\xi_{\perp} \\ 1
	\end{bmatrix}\label{eq:v_dot}
	\end{align}
	with $\bar{S} = \bar{P}\Vb' +{{P}_\times} {\Vp'}$, $S_\perp  = {{P}_\perp} {\Vp'}+{P}_\times'\Vb'$ and $U_i = \Vb'A_{i}'S_\perp'  + 
	\bar{S}A_{i}{V}_\perp$, which follows from~\eqref{eq:identities}. 
	
	For a given $\lambda\in \Lambda$, which in our context is associated with the chosen singular equilibrium point $x_e$ 
	expressed as~\eqref{eq:xe_singular1}-\eqref{eq:xe_singular2}, %
	two different cases may occur:
	\begin{enumerate}
		\item The  nullspaces of $A_i,~\forall i\in\mathbb{K}$ include $\mathcal{N}$, so $\mathcal{N}$ is called a {\em shared nullspace}.
		\item The nullspace of at least one $A_i,~i\in\mathbb{K}$ does not include $\mathcal{N}$. In this case, $\mathcal{N}$ is a {\em particular nullspace}.
	\end{enumerate} 
	Note that for the first case, $\mathcal{N}$ is a common nullspace for the subsystems $A_i, ~i\in \mathbb{K}$ and, therefore, $A_i\Vp = 0, ~\forall i\in \K $. {This case is of great interest in this paper and is considered in the next theorem. Some remarks on the latter case will be provided subsequently.}
	
	\begin{theorem} \label{theo:common}Under the Assumption \ref{aspt:regular}, let a desired point $x_e$ satisfying~\eqref{eq:xe_singular1}-\eqref{eq:xe_singular2} and  an associated $\lambda\in\Lambda$ be given, such that the nullspace $\mathcal{N}$ of $A_\lambda$ is a shared nullspace for all $A_i,~i\in\mathbb{K}$. Define the matrix $M = \Vp'-\Vp'A_\lambda\Vb(\Vb' A_\lambda \Vb)^{-1 }\Vb'$ {where $\Vp\in\R^{n\times m}$ and $\Vb\in\R^{n\times p}$ have orthonormal columns spanning the nullspace and row space of $A_\lambda$,respectively}.
		{If $0\in{\rm Int} (\Pp),$ with $\Pp={\rm co} (\{M\ell_i~:~i\in\K\})\subset\R^m$}  and there exist matrices $\bar{P}$ and ${P}_\perp$  satisfying the LMIs
		\begin{equation}
		\He\left((\bar{P}\Vb' +{{P}_\times} {\Vp'})A_\lambda \Vb \right)<0\label{eq:lmi1}
		\end{equation}
		\begin{equation}\begin{bmatrix}
		\bar{P} & P_\times\\
		\bullet & P_{\perp}
		\end{bmatrix}>0\label{eq:lmi2}
		\end{equation}
		with ${P}_\times' = -P_\perp \Vp'A_\lambda\Vb(\Vb'A_\lambda\Vb)^{-1}$, then the state  $x(t)$ of~\eqref{eq:sys} globally asymptotically reaches the equilibrium point $x_e$  under the switching function
		\begin{equation}
		\sigma(t) = \arg\min_{i\in\K}  f_{i}(\xi(t)).\label{eq:switch_fun}
		\end{equation} 
	\end{theorem}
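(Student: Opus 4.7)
The plan is to show that the Lyapunov candidate $v(\xi)$ in~\eqref{eq:lyap} is positive definite and radially unbounded and that $\min_{i\in\K}f_i(\xi)<0$ for every $\xi\neq 0$. Under the min-type rule~\eqref{eq:switch_fun}, this yields $\dot v(\xi(t))\leq \min_{i\in\K}f_i(\xi(t))<0$ along any Filippov solution of the closed-loop system, and global asymptotic stability of $\xi=0$ then follows from LaSalle's invariance principle adapted to discontinuous systems. Positive definiteness and radial unboundedness of $v$ follow directly from~\eqref{eq:lmi2}, since the columns of $[\Vb~\Vp]$ form an orthonormal basis of $\R^n$ by~\eqref{eq:identities}.

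To obtain the key inequality, I would first use $A_i\Vp=0$, $\forall i\in\K$, to simplify~\eqref{eq:v_dot}: the block $\He(S_\perp A_i\Vp)$ vanishes and $U_i$ collapses to $\Vb'A_i'S_\perp'$. Taking the convex combination with the weights $\lambda$ associated to $x_e$ and noticing that $\ell_\lambda=A_\lambda x_e+b_\lambda=0$ (from~\eqref{eq:xe_singular2} together with $A_\lambda\Vp=0$) give $f_\lambda(\xi)=\bar\xi'\He(\bar{S}A_\lambda\Vb)\bar\xi+2\bar\xi'\Vb'A_\lambda'S_\perp'\xi_\perp$. The prescribed $P_\times'$ --- well defined because Assumption~\ref{aspt:regular} together with Lemma~\ref{lemma:defective} make $\Vb'A_\lambda\Vb$ invertible --- is exactly chosen so that $S_\perp=P_\perp M$ and $\Vb'A_\lambda'S_\perp'=0$, leaving $f_\lambda(\xi)=\bar\xi'\He(\bar{S}A_\lambda\Vb)\bar\xi$, which by~\eqref{eq:lmi1} is strictly negative whenever $\bar\xi\neq 0$.

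The desired inequality is then proved in two cases, which together cover every $\xi\neq 0$ since $[\Vb~\Vp]$ is orthogonal. If $\bar\xi\neq 0$, the identity above gives $\min_{i\in\K}f_i(\xi)\leq f_\lambda(\xi)<0$ directly. If $\bar\xi=0$ and $\xi_\perp\neq 0$, the same simplifications reduce $f_i(\xi)$ to $2\xi_\perp'S_\perp\ell_i=2\xi_\perp'P_\perp M\ell_i$; since $P_\perp>0$ follows from~\eqref{eq:lmi2} by a Schur complement and $0\in{\rm Int}(\Pp)$ by hypothesis, the linear functional $\eta\mapsto 2\xi_\perp'P_\perp\eta$ must take a strictly negative value on some $M\ell_{i^*}$, providing a $\lambda^*\in\Lambda$ with $\sum_i\lambda_i^*f_i(\xi)<0$ and hence $\min_{i\in\K}f_i(\xi)<0$.

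The step I expect to be most delicate is the case $\bar\xi=0$: the natural choice $\lambda^*=\lambda$ only gives $f_\lambda=0$ along the nullspace direction, so strict decrease there must be provided by a genuinely different convex combination, and the interior condition on $\Pp$ is what unlocks it. The other subtle point is translating the pointwise strict negativity of $\min_i f_i$ into actual trajectory convergence: the min rule~\eqref{eq:switch_fun} is discontinuous and Filippov solutions may slide along switching surfaces, so the conclusion must be reached via LaSalle's invariance principle in its nonsmooth form rather than by classical Lyapunov arguments.
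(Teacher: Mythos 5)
Your proposal is correct and follows essentially the same route as the paper's proof: the same Lyapunov candidate, the same observation that the prescribed $P_\times$ makes $U_\lambda=0$ and $S_\perp=P_\perp M$, the same two-case split on $\bar\xi$, and the same use of $0\in{\rm Int}(\Pp)$ to obtain strict decrease along the nullspace directions. The only cosmetic differences are that in the $\bar\xi=0$ case the paper extracts the quantitative bound $\dot v\leq -2\epsilon\,\xi_\perp'P_\perp\xi_\perp$ from $-\epsilon\xi_\perp\in\Pp$ rather than your qualitative separating-functional argument, and it concludes via a direct Lyapunov theorem for differential inclusions (Filippov) rather than a nonsmooth LaSalle argument, which is indeed not needed since $\dot v$ is strictly negative away from the origin.
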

	\begin{proof}
		{Consider the candidate Lyapunov function $v(x)$ defined in~\eqref{eq:lyap}.} Let us demonstrate that, for system~\eqref{eq:sys} under the switching rule~\eqref{eq:switch_fun}, we have  $\dot{v}(\xi(t)) <0$ for all $t\geq 0$ such that $\xi(t)\neq 0$. First of all, we have from~\eqref{eq:lmi2} that $v(\xi)>0$ for all $\xi\neq 0$. From~\eqref{eq:v_dot} and~\eqref{eq:switch_fun}, we have
		\begin{align}
		\dot{v}(\xi(t)) &= \min_{i\in\K} f_{i}(\xi(t))\nonumber\\
		&=  \min_{\mu^* \in\Lambda} f_{\mu^*}(\xi(t))\nonumber\\
		&\leq f_\mu(\xi(t)),\quad \forall ~\mu\in\Lambda.\label{eq:upper_lim_v_dot}
		\end{align}
		In~\eqref{eq:upper_lim_v_dot}, the second equality and the inequality are due to properties of the min operator. {In view of the decomposition given in~\eqref{eq:decomposition_xi},} let us now consider two cases:  $\bar{\xi}\neq0$ and $\bar{\xi}=0$. 
		
		For $\bar{\xi}\neq0$, we evaluate~\eqref{eq:upper_lim_v_dot} with $\mu=\lambda$ to obtain
		\begin{align}
		\dot{v}(\xi(t)) &\leq \begin{bmatrix}
		\bar{\xi}\\\xi_{\perp} \\ 1
		\end{bmatrix}'\!\!\begin{bmatrix}
		\He\left(\bar{S}A_{\lambda} \Vb \right) & \bullet & \bullet\\
		U_\lambda' & \He\left(S_\perp A_{\lambda} {V}_\perp \right)  &\bullet\\
		\ell_\lambda'\bar{S}' & \ell_\lambda'S_\perp'& 0
		\end{bmatrix}\!\!\begin{bmatrix}
		\bar{\xi}\\\xi_{\perp} \\ 1
		\end{bmatrix}	\nonumber\\
		&= 2\bar \xi'\bar{S}A_{\lambda} \Vb \bar \xi\nonumber \\
		&< 0
		\label{eq:proof_xibar_nonzero}
		\end{align} 
		where the equality follows from the choice of $P_\times$, which makes $U_\lambda = 0$, from the fact that $A_\lambda \Vp = 0$ and that $\ell_\lambda =A_\lambda x_e+b_\lambda=0$  as a consequence of choosing $x_e$ satisfying~\eqref{eq:xe_singular1}-\eqref{eq:xe_singular2}. The last inequality is due to the LMI~\eqref{eq:lmi1}, recalling that $\bar{S} = \bar{P}\Vb' +{{P}_\times} {\Vp'}$. 
		
		Now, for the case where $\bar \xi= 0$, the function~\eqref{eq:v_dot} becomes
		\begin{equation}\label{eq:upper_bound_c2}
		f_i(\xi) = 2\xi_{\perp}'(S_\perp A_{i} {V}_\perp) \xi_{\perp}+2\xi_{\perp}'S_\perp \ell_{i} .
		\end{equation}
		Recalling that $A_i\Vp = 0, \forall i\in \mathbb{K}$ and noting that the chosen $P_\times$ makes $S_\perp=P_\perp M$, we can develop~\eqref{eq:upper_lim_v_dot} again to obtain 
		\begin{align}
		\dot{v}(\xi(t)) &=\min_{i\in\K} f_i(\xi(t))\nonumber\\
		&=  \min_{i\in\K} 2\xi_{\perp}'P_\perp M \ell_{i} \nonumber\\
		&=  \min_{\mu\in\Lambda} 2\xi_{\perp}'P_\perp M \ell_{\mu} \nonumber\\
		&\leq  -\epsilon 2\xi_{\perp}'P_\perp \xi_{\perp}  \nonumber\\
		&<0\label{eq:upper_lim_v_dot2}
		\end{align}
		where the first inequality is due to $-\epsilon\xi_{\perp}\in {\rm co} (\{M\ell_i:i\in\K\})$ for some $\epsilon>0$ small enough, which is implied by the condition {$0\in{\rm Int} (\Pp)$, where we recall that $\Pp ={\rm co} (\{M\ell_i:i\in\K\})$}.
		Finally, for the two studied cases, we have that $\dot{v}(\xi(t))<0, ~\forall \xi(t)\neq 0$ and, given the continuity of $\dot{v}(\xi)$ with respect to $\xi$, we can conclude that  $x_e$ is globally asymptotically stable {from Theorem~1 in~\cite[p. 153]{filippov1988differential}}.
	\end{proof}
	This is our first result dealing with global asymptotic stability of a singular equilibrium point $x_e\in X_e$ as presented in Definition~\ref{def:singular_equilibrium}. For this case, besides the feasibility of the LMIs~\eqref{eq:lmi1}-\eqref{eq:lmi2}, there exists a condition on the {polytope} ${\Pp \subset \R^m}$, which must contain the origin in its interior. 
	
	{
		\begin{remark}\label{rem:zeroinP}The condition $0\in{\rm Int} (\Pp)$ with $\mathbb{P}\subset \R^m$ can be verified, for instance, by checking if $\rank([M\ell_i]_{i\in\K})=m$ and if there exists  $\mu\in{\rm Int}(\Lambda)$ such that $M\ell_\mu=0$. 
	Note that these conditions can be easily checked and are not only sufficient for but also equivalent to $0\in{\rm Int} (\Pp)$. To verify the necessity assume $0\in{\rm Int} (\Pp)$, which implies that $\rank([M\ell_i]_{i\in\K})=m$ and that for an arbitrary $\mu^\ast\in{\rm Int}(\Lambda)$, we have $-\epsilon M\ell_{\mu^\ast}\in\Pp$ for some $\epsilon>0$. Define the vector $\mu^-\in\Lambda$ such that $M\ell_{\mu^-}=-\epsilon M\ell_{\mu^\ast}$. Then, the vector formed by the weighted average $\mu = (\epsilon+1)^{-1}(\epsilon\mu^\ast+\mu^-)\in{\rm Int}(\Lambda)$ verifies $M\ell_\mu=0$. 
	Notice that one particular case may arise when the vector $\lambda$ associated with $x_e$ belongs to ${\rm Int}(\Lambda)$. In this case, the general condition $0\in{\rm Int} (\Pp)$ becomes simply $\rank([M\ell_i]_{i\in\K})=m$.
\end{remark}} 
	
	As one can notice from the proof of Theorem~\ref{theo:common}, an interpretation for the requirement that the origin must lie in the interior of the convex combination of $M\ell_i,~i\in\K$, is the following: to guide $x_\perp$ through the nullspace of dimension $m$ towards $x_{e\perp}$, at least $m$ linearly independent vectors $M\ell_i$ capable of driving the state to all directions in the equilibrium subspace are necessary. For instance, in the case of a switched linear system $\dot{x}(t)=A_{\sigma(t)} x(t)$ with all matrices $A_i$ sharing a nullspace $\mathcal{N}$, the component $x_\perp(t)=\Vp' x(t)$ remains constant for all $t\geq0$ and for any switching signal $\sigma(t)$ {if $x(0)\in\mathcal{N}$}. Thus, the {presence} of affine terms is crucial to ensure global asymptotic stability in switched systems with shared nullspaces. The following example borrowed from~\cite{scharlau2014switching} demonstrates how this condition can be used to stabilize a switched affine system with a shared nullspace of $\dim(\mathcal{N})=1$.
	
	\begin{example}[{Example 2 of \cite{scharlau2014switching}}]\label{ex:scharlau}
		Let us consider the system defined in Example 2 of~\cite{scharlau2014switching}, given as~\eqref{eq:sys} with
		\begin{equation}
		A_1=A_2=\begin{bmatrix}
		0 & 0\\
		0 & 0
		\end{bmatrix},~A_3=\begin{bmatrix}
		0 & 0\\
		0 & -1
		\end{bmatrix},~ b_1=\begin{bmatrix}
		-1\\0
		\end{bmatrix}
		\end{equation}
		$b_2=-b_1$ and $b_3=0$. Any $\lambda\in\Lambda$ such that $\lambda_1=\lambda_2$ is associated with the singular equilibrium point $x_e=0~{\in\R^2}$ for this system, since the matrices $A_i,~i\in\K$ share the nullspace $\mathcal{N}$ spanned by $V_\perp = [1~~0]'$. {Consider $\bar{V}=[0~~1]'$.} Notice that any point $x_e=[\tau~~0]',~\tau\in\R$, could also be chosen. Let us choose, for instance, $\lambda=[1/3~~1/3~~1/3]'$ allowing us to verify that scalars $\bar{P}=1.5$ and $P_\perp=1$ satisfy the LMIs~\eqref{eq:lmi1}-\eqref{eq:lmi2}. Also, notice that the matrix {$[M\ell_i]_{i\in\K}=[-1 ~ 1 ~  0]$ }has rank 1 {and that $\lambda\in{\rm Int}(\Lambda)$}, which implies that {$0\in{\rm Int} (\Pp)$} as required in the statement of Theorem~\ref{theo:common}. Then, one can define the Lyapunov function~\eqref{eq:lyap} to implement the state-dependent switching rule~\eqref{eq:switch_fun} and ensure global asymptotic stability of $x_e=0$. Figure~\ref{fig:scharlau} shows the state trajectories evolving from $x_0 =[-4~~~5]'$ and converging to $x_e$, the obtained switching signal and the corresponding value of the Lyapunov function~\eqref{eq:lyap} over $x(t)$.
		\begin{figure}\centering
			\def\svgwidth{.5\linewidth}
			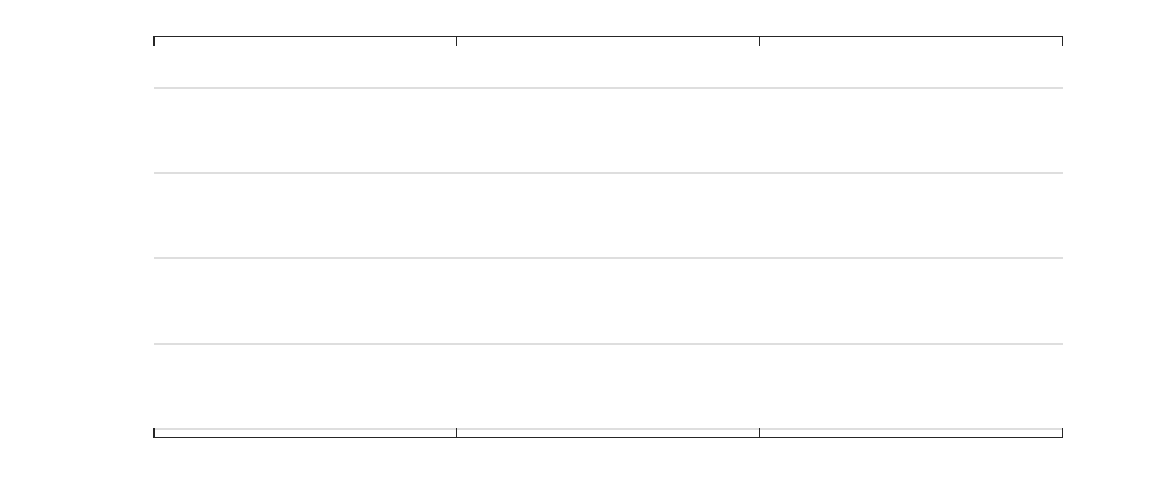\\
			\def\svgwidth{.5\linewidth}
			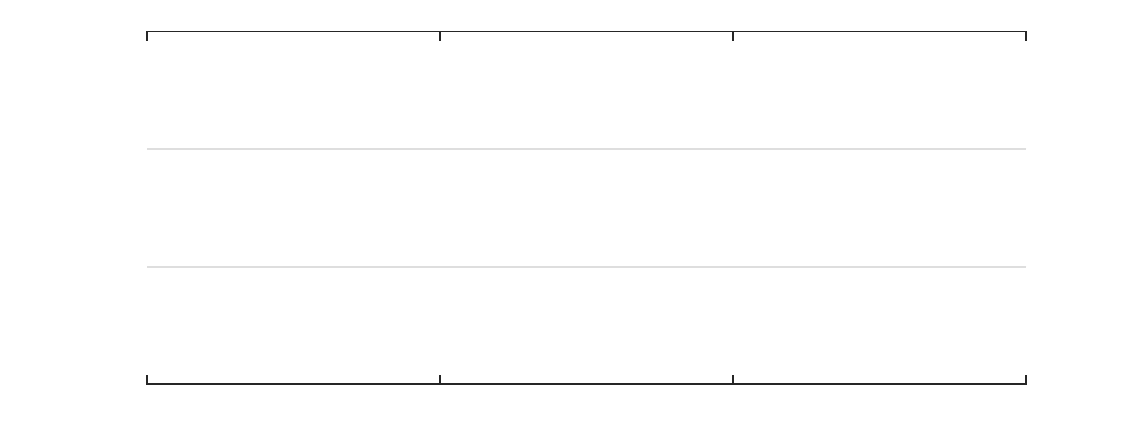\\
			\def\svgwidth{.5\linewidth}
			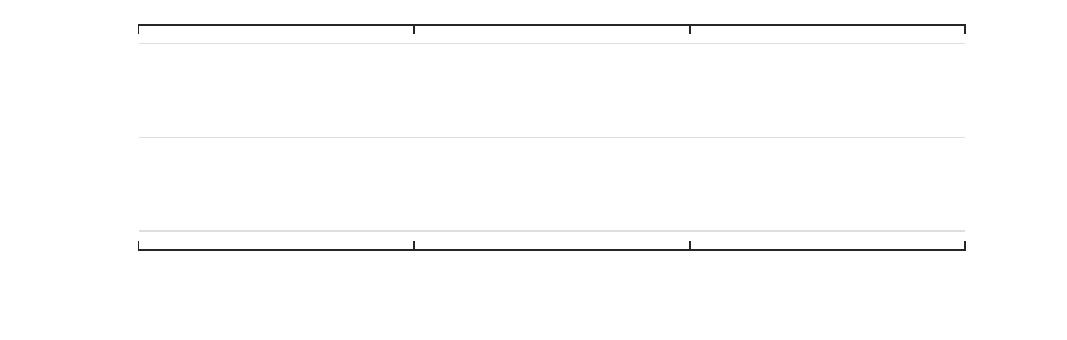
			\caption{State trajectories (top) $x_1(t)$  and $x_2(t)$ (green and blue, respectively) evolving from $x_0 =[-4~~5]'$, corresponding  Lyapunov function $v(x(t))$ (middle) and  switching signal $\sigma(t)$ generated by the switching function~\eqref{eq:switch_fun} (bottom).}\label{fig:scharlau}
		\end{figure}
	\end{example}
	{Applying Theorem 1 from~\cite{scharlau2014switching} in Example \ref{ex:scharlau}, the stabilization conditions were not able to ensure asymptotic stability directly, since the inequality that should be {strictly} negative to impose the decreasing of the Lyapunov function for all $t\geq 0$ was only bounded by a negative semi-definite function. The authors, however, presented an \textit{ad hoc} argument to demonstrate that for this particular example {their} designed switching function still guarantees global asymptotic convergence to the equilibrium. Notice that the stability conditions of Theorem \ref{theo:common} in the present paper handle this case {directly,} without requiring this \textit{a posteriori} analysis.}
	
	Another important remark {about} Theorem 1 is that it handles the cases studied in~\cite{beneux2018integral} and~\cite{ndoye2019robust} where a switched affine system~\eqref{eq:sys} that presents a Hurwitz convex combination $A_\lambda$, for a given $\lambda\in\Lambda$, {is augmented by adding new state variables that introduce integrators to the system dynamics.} The resulting augmented system improves the steady-state performance due to the integral action. Indeed, consider the augmented error system, containing as a particular case the dynamics studied in~\cite{beneux2018integral,ndoye2019robust},
	\begin{equation}
	\dot{\tilde{\xi}}(t) = \tilde{A}_{\sigma(t)}\tilde{\xi}(t) + \tilde{\ell}_{\sigma(t)}\label{eq:aug_sys}
	\end{equation}
	with $\tilde{\xi}(t) = [\xi(t)'~~z(t)']'$, $z(t) = \int_0^t C_{\sigma(\tau)}\xi(\tau) d\tau$ and
	\begin{equation}
	\tilde{A}_{i} = \begin{bmatrix}
	A_i &0\\
	C_i &0
	\end{bmatrix},~~\tilde{\ell_i} =\begin{bmatrix}
	\ell_i\\
	0
	\end{bmatrix}
	\end{equation}
	where $C_i \in\R^{m\times n},~i\in\K,$ are chosen by the designer. Evaluating the conditions of Theorem~\ref{theo:common} for the augmented system in~\eqref{eq:aug_sys}, we can conclude that they hold if $A_\lambda$ is Hurwitz and $0\in{\rm Int} \big({\rm co} (C_\lambda A_\lambda^{-1}\ell_i ~:~{i\in\K})\big)$.
	
	{Finally, Theorem \ref{theo:common} allows a generalization for the case where the nullspace $\mathcal{N}$ is shared by some of the matrices $A_i,~i\in\K$ but not all. This is given in the following corollary.
	\begin{corollary}\label{coro:shared}
		Consider a switched affine system~\eqref{eq:sys} and a given $\lambda\in\Lambda$ for which the nullspace of $A_\lambda$ is not shared by all $A_i,~i\in\K$  but only by $A_i,~i\in\K_\perp\subset\K$.		For this system, the statement of Theorem \ref{theo:common} remains valid whenever the condition {$0\in{\rm Int} (\Pp)$ is replaced by $0\in{\rm Int} (\Pp_\perp)$ with $\Pp_\perp={\rm co} (\{M\ell_i:i\in\K_\perp\})$}.
	\end{corollary}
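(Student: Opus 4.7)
The plan is to follow the proof of Theorem~\ref{theo:common} almost verbatim, and argue that the assumption ``all $A_i$ share the nullspace $\mathcal{N}$'' is used in a single place, namely the collapse of $f_i(\xi)$ in~\eqref{eq:upper_bound_c2} when $\bar\xi=0$. Since the switching rule is a $\min$ over $\K$, replacing this $\min$ by a $\min$ over $\K_\perp\subseteq\K$ only \emph{raises} the bound on $\dot v$, so as long as the restricted $\min$ can still be driven strictly negative, the conclusion survives with $\Pp$ replaced by $\Pp_\perp$.

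I would keep the same candidate Lyapunov function~\eqref{eq:lyap}, the expression~\eqref{eq:v_dot} for $f_i(\xi)$, and the same choice $P_\times' = -P_\perp \Vp'A_\lambda\Vb(\Vb'A_\lambda\Vb)^{-1}$, which still gives $U_\lambda=0$. Positivity of $v$ follows from~\eqref{eq:lmi2} as before. The case split $\bar\xi\neq 0$ versus $\bar\xi=0$ is reused. In the case $\bar\xi\neq 0$, the chain~\eqref{eq:proof_xibar_nonzero} rests only on three facts: $U_\lambda=0$, $A_\lambda\Vp=0$, and $\ell_\lambda=0$. The first holds by construction of $P_\times$; the second holds because $\Vp$ is defined as spanning the nullspace of $A_\lambda$ itself, not of the individual $A_i$; and the third is the equilibrium condition~\eqref{eq:xe_singular2}. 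Hence LMI~\eqref{eq:lmi1} still yields $\dot v(\xi(t))\leq 2\bar\xi'\bar S A_\lambda\Vb\bar\xi<0$, with no modification.

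In the case $\bar\xi=0$, I would introduce the additional inequality
\begin{equation*}
\dot v(\xi(t))=\min_{i\in\K}f_i(\xi(t))\leq \min_{i\in\K_\perp}f_i(\xi(t)),
\end{equation*}
which is valid because $\K_\perp\subseteq\K$. For indices $i\in\K_\perp$ one does have $A_i\Vp=0$, so each term in the restricted minimum reduces to $2\xi_\perp'P_\perp M\ell_i$, exactly as in~\eqref{eq:upper_bound_c2}. I would then replay the argument of~\eqref{eq:upper_lim_v_dot2} over the sub-simplex of convex combinations supported on $\K_\perp$: by the hypothesis $0\in{\rm Int}(\Pp_\perp)$, there exist $\epsilon>0$ and $\mu\in\Lambda$ supported on $\K_\perp$ such that $M\ell_\mu=-\epsilon\,\xi_\perp$, which gives $\dot v(\xi(t))\leq -2\epsilon\,\xi_\perp'P_\perp\xi_\perp<0$. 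The conclusion of global asymptotic stability then follows from Theorem~1 in~\cite{filippov1988differential} as in Theorem~\ref{theo:common}.

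The main obstacle is conceptual rather than computational: one must isolate the exact spot where the shared-nullspace hypothesis is used in the proof of Theorem~\ref{theo:common} and recognize that relaxing $\min_{i\in\K}$ to the auxiliary upper bound $\min_{i\in\K_\perp}$ leaves the argument structurally unchanged, at the cost of requiring the origin to lie in the interior of the smaller polytope $\Pp_\perp$. No new LMI or Lyapunov structure is needed beyond those already present in Theorem~\ref{theo:common}.
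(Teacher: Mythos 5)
Your proposal is correct and matches the paper's own argument, which is literally to repeat the proof of Theorem~\ref{theo:common} with $\K_\perp$ in place of $\K$ in the $\bar\xi=0$ branch~\eqref{eq:upper_lim_v_dot2}; your additional observation that $\min_{i\in\K}f_i\leq\min_{i\in\K_\perp}f_i$ justifies this substitution cleanly. No discrepancies.
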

	\begin{proof}
		The proof is identical to the one of Theorem~\ref{theo:common}, but considering $\K_\perp$ instead of $\K$ in~\eqref{eq:upper_lim_v_dot2}.
	\end{proof}
	{\begin{remark}
		The general case where the nullspace $\mathcal{N}$ is a \emph{particular nullspace}, as defined before Theorem~\ref{theo:common}, and $0\notin{\rm Int} (\Pp_\perp)$ is more intricate because $A_i\Vp\neq0$ for some $i\in\K$, which precludes the developments in~\eqref{eq:upper_lim_v_dot2}. However, analogous conditions may be developed when $0\in{\rm Int} \big({\rm co} (\{M(A_i\Vp z+\ell_i):i\in\K\})\big)$ for all $z\in\R^m$. This is equivalent to verifying if the origin is contained in the interior of the convex hull of a set of affine vector functions. This more general and involved problem will be tackled in future work.
	\end{remark}}

	\section{Convergence rate analysis}\label{sec:conv_rate}
	
	The convergence rate to singular equilibrium points is not globally exponential, in the general case. This has also been observed for the particular case studied in~\cite{beneux2018integral}. {For instance, a subclass of systems for which exponential stability cannot be globally ensured is given in Appendix~\ref{app:proof_violated}.}

	{This fact is also }reflected on the results of Theorem~\ref{theo:common} in the sense that one may not obtain a quadratic negative-definite upper bound on the time derivative of the Lyapunov function that holds globally. This motivates the investigation of what is the {local} convergence rate ensured by the design procedure presented in Theorem~\ref{theo:common} inside some given set.  To answer this question we investigated the local guaranteed convergence rate in the next corollary. Before presenting it, let us define a vector function $\gamma:{\Lambda}\times\R\times\K\rightarrow\Lambda$ as 
	\begin{equation}
	\gamma_j({\lambda},\epsilon,i) = \left\{ \begin{array}{cl}
	\lambda_j + \epsilon(1-\lambda_j),& ~i=j\\
	\lambda_j - \epsilon\lambda_j,& ~i\neq j
	\end{array}\right. \label{eq:lambda_eps}
	\end{equation}
	Notice that, for each $i\in\K$ {and $\lambda\in\Lambda$}, this function is only defined for $\epsilon\in[\underline{\epsilon}_i,1]$ with $\underline{\epsilon}_{i} = \max\big( \{\lambda_i/(\lambda_i-1)\} \cup \{(\lambda_j-1)/\lambda_j:{j\in \K}, ~j\neq i\}\big)$, which ensures $\gamma(\lambda,\epsilon,i)\in\Lambda$. Also, notice that $X_{\gamma(\lambda,\epsilon,i)}= (1-\epsilon)X_\lambda +\epsilon X_i$ for arbitrary matrices $X_j,~j\in\K$. Finally, whenever the strict LMI~\eqref{eq:lmi1} is satisfied, note that there always exist an $\epsilon>0$ and a matrix $G_\epsilon>0$ such that the inequalities 
	\begin{align}
	\He\left((\bar{P}\Vb' +{{P}_\times} {\Vp'})A_{\gamma(\lambda,\epsilon,i)} \Vb \right)&<-G_\epsilon\label{eq:gamma_eps_plus}\\
	\He\left((\bar{P}\Vb' +{{P}_\times} {\Vp'})A_{\gamma(\lambda,-\epsilon,i)} \Vb \right)&<-G_\epsilon\label{eq:gamma_eps_minus}\
	\end{align}
	are feasible for all $i\in \K_a({\lambda})$ with $\K_a(\cdot)$ defined as \begin{equation}\label{eq:Ka}
	\K_a(\mu) = \{i\in \K:\mu_i\neq 0, ~\mu\in \Lambda\}
	\end{equation}
	\begin{corollary}\label{coro:coro01}
		Let $L=[\ell_i]_{i\in\K_a(\lambda)}$. If  $\rank(ML)=m$, the switching function~\eqref{eq:switch_fun} designed through the conditions in Theorem~\ref{theo:common} locally ensures the exponential convergence 
		\begin{equation}\label{eq:exp_convergence}
		v(\xi) \leq e^{-\alpha t} v(\xi(0))
		\end{equation}
		with convergence rate $\alpha = s_{\min}(Q)/s_{\max}(P)$ where 
		\begin{equation}\label{eq:Q_and_P}
		Q= \begin{bmatrix}
		G_\epsilon&0\\
		0&0
		\end{bmatrix}\!+\! \frac{\epsilon\beta}{|\K_a(\lambda)|}\!\begin{bmatrix}
		\bar{S}L\\S_\perp L
		\end{bmatrix}\!\! \begin{bmatrix}
		\bar{S}L\\S_\perp L
		\end{bmatrix}'\!\!, ~~P = \begin{bmatrix}
		\bar{P} & P_\times\\
		\bullet & P_{\perp}
		\end{bmatrix}
		\end{equation}
		with a given $\beta>0$, a matrix $G_\epsilon>0$ and a scalar $\epsilon>0$ satisfying~\eqref{eq:gamma_eps_plus}-\eqref{eq:gamma_eps_minus} for all $i\in \K_a(\lambda)$. This exponential convergence rate holds within the level set
		\begin{equation}
		\Omega{(r)}=\left\{\xi \in \R^n~: ~v(\xi)\leq r \right\}\label{eq:level_set}
		\end{equation}
		for any $r>0$ such that $\Omega{(r)}\subseteq\mathcal{X}$ with
		\begin{equation}
		\mathcal{X} = \bigcap_{i\in\K_a(\lambda)} \tilde\Omega_i \cap \{\xi\in\R^n~:~ |g_i(\xi)|\leq1/\beta\}\label{eq:region_omega}
		\end{equation}
		\begin{equation}\label{eq:omega_tilde}
		\tilde \Omega_i=\{\xi\in\R^{n}:|g_i(\xi)| \geq\beta h_i(\xi) \}
		\end{equation}
		and the functions $g_i(\cdot) $ and  $h_i(\cdot)$ are given by
		\begin{equation}\label{eq:g_i}
		g_i(\xi) = \bar{\xi}'U_i\xi_\perp+ \bar{\xi}'\bar{S}\ell_i + \xi_\perp' S_\perp\ell_i
		\end{equation}
		\begin{equation}\label{eq:h_i}
		h_i(\xi):=\frac{1}{2}\begin{bmatrix}
		\bar{\xi} \\ \xi_\perp
		\end{bmatrix}'\!\! \begin{bmatrix}
		\bar{S}\ell_i\\S_\perp \ell_i
		\end{bmatrix}\!\! \begin{bmatrix}
		\bar{S}\ell_i\\S_\perp \ell_i
		\end{bmatrix}'\!\!\begin{bmatrix}
		\bar{\xi} \\ \xi_\perp
		\end{bmatrix} .
		\end{equation}
	\end{corollary}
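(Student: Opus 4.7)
The plan is to upper bound $\dot v(\xi)$ by a negative-definite quadratic form in $[\bar\xi;\xi_\perp]$ throughout $\mathcal X$, and then convert this into exponential decay of $v$ by a Rayleigh-quotient argument. The starting point is that $f_\mu(\xi)$ is affine in $\mu\in\Lambda$, so the switching rule~\eqref{eq:switch_fun} together with convexity of $\Lambda$ gives
$\dot v(\xi) = \min_{i\in\K} f_i(\xi) \leq \frac{1}{|\K_a(\lambda)|}\sum_{i\in\K_a(\lambda)} f_{\mu_i}(\xi)$
for any family $\{\mu_i\}\subset \Lambda$, since the minimum is no larger than any convex average.

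For each $i\in\K_a(\lambda)$ I would take $\mu_i = \gamma(\lambda,s_i\epsilon,i)$ with the state-dependent sign $s_i := -\mathrm{sign}(g_i(\xi))$ and with $\epsilon,G_\epsilon$ from~\eqref{eq:gamma_eps_plus}--\eqref{eq:gamma_eps_minus}. Using $\ell_\lambda=0$, $A_\lambda V_\perp=0$, the shared-nullspace property $A_iV_\perp=0$ inherited from Theorem~\ref{theo:common} (which kills both blocks involving $S_\perp A_i V_\perp$ in~\eqref{eq:v_dot}), and $U_\lambda=0$ from the prescribed choice of $P_\times$, the expansion of~\eqref{eq:v_dot} at $\mu=\mu_i$ collapses to
$f_{\mu_i}(\xi) = \bar\xi'\He(\bar S A_{\gamma(\lambda,s_i\epsilon,i)}\bar V)\bar\xi + 2s_i\epsilon\, g_i(\xi).$
Bounding the first term by $-\bar\xi'G_\epsilon\bar\xi$ via~\eqref{eq:gamma_eps_plus}--\eqref{eq:gamma_eps_minus}, and using $s_i g_i=-|g_i|$ on the second, the average then yields
$\dot v(\xi) \leq -\bar\xi'G_\epsilon\bar\xi - \tfrac{2\epsilon}{|\K_a(\lambda)|}\textstyle\sum_{i\in\K_a(\lambda)}|g_i(\xi)|.$

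Next, $\xi\in\tilde\Omega_i$ gives $|g_i|\geq \beta h_i = \tfrac{\beta}{2}w_i^2$ with $w_i:=\bar\xi'\bar S\ell_i+\xi_\perp' S_\perp\ell_i$, so summing and writing $\sum_i w_i^2=[\bar\xi;\xi_\perp]'[\bar SL;S_\perp L][\bar SL;S_\perp L]'[\bar\xi;\xi_\perp]$ gives exactly $\dot v\leq -[\bar\xi;\xi_\perp]' Q [\bar\xi;\xi_\perp]$ with $Q$ as in~\eqref{eq:Q_and_P}. To complete the rate I would check $Q>0$ by Schur complement: the identity $S_\perp=P_\perp M$ (forced by the choice of $P_\times$ in Theorem~\ref{theo:common}) together with $P_\perp>0$ makes the $\xi_\perp$-block of $[\bar SL;S_\perp L][\bar SL;S_\perp L]'$ positive definite precisely when $\mathrm{rank}(ML)=m$, while $G_\epsilon>0$ covers the $\bar\xi$-block. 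Combining $\dot v\leq -s_{\min}(Q)\|[\bar\xi;\xi_\perp]\|^2$ with $v(\xi)\leq s_{\max}(P)\|[\bar\xi;\xi_\perp]\|^2$ then yields $\dot v\leq -\alpha v$, and direct integration gives~\eqref{eq:exp_convergence}. Invariance of $\Omega(r)\subseteq \mathcal X$ under the non-increasing flow of $v$ makes the estimate hold for all $t\geq 0$.

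The main obstacle I anticipate is the bookkeeping around the discontinuous, state-dependent sign selection $s_i=-\mathrm{sign}(g_i(\xi))$; although the terminal bound on $\dot v$ is smooth in $\xi$, one has to verify that for the prescribed $\epsilon$ every candidate $\gamma(\lambda,s_i\epsilon,i)$ actually lies in $\Lambda$ for both orientations (which is why~\eqref{eq:gamma_eps_plus}--\eqref{eq:gamma_eps_minus} are imposed for both $\pm\epsilon$) and that the convex averaging really bounds the min. This is also the stage at which the auxiliary constraint $|g_i|\leq 1/\beta$ built into $\mathcal X$ plays its role, by controlling the quadratic cross-term $\bar\xi'U_i\xi_\perp$ hidden inside $g_i$ relative to the linear part $w_i$, so that the $\tilde\Omega_i$-bound translates uniformly into the claimed quadratic estimate on $\dot v$ throughout the region.
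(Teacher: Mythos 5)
Your proposal is correct and follows essentially the same route as the paper's proof in Appendix~B: the paper likewise evaluates the bound $\dot v\leq f_\mu$ at $\mu=\gamma(\lambda,\pm\epsilon,i)$ (your state-dependent sign $s_i=-\mathrm{sign}(g_i)$ is just a repackaging of taking the better of the two bounds, yielding $-2\epsilon|g_i|$), averages over $i\in\K_a(\lambda)$, invokes $|g_i|\geq\beta h_i$ on $\tilde\Omega_i$ to reach the quadratic form with $Q$, establishes $Q>0$ from $G_\epsilon>0$ and $S_\perp L=P_\perp ML$ having full row rank, and concludes by the comparison lemma. The only cosmetic difference is that you use the defining inequality of $\tilde\Omega_i$ directly rather than motivating it through the $|g_i|\geq\beta g_i^2$ and Taylor-expansion discussion, which is immaterial to correctness.
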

	The proof is given in Appendix \ref{app:B}.
	
	A few observations are pertinent at this point. The first is that the only additional requirement in Corollary~\ref{coro:coro01} with respect to Theorem~\ref{theo:common} is $\rank(ML)=m$. Also, notice that for an index independent matrix $A_i=A,~\forall i\in\K$, the matrix $G_\epsilon$ is independent of $\epsilon$ and this scalar can be chosen as large as possible if $\gamma_i(\epsilon,i),\gamma_i(-\epsilon,i)\in\Lambda$ for all $i\in\K_a(\lambda)$. This clearly is the best choice for $\epsilon$ in terms of obtaining the tightest estimate for the convergence rate $\alpha$ in~\eqref{eq:exp_convergence}. Note that $\alpha$ is non-decreasing as $\epsilon$ and $\beta$ increases, in this case. Another remark is that $\beta$ states a trade-off between  the convergence rate estimate and the size of the region $\Omega{(r)}$ on which this estimate holds. This readily follows from the definition of this region in~\eqref{eq:region_omega} that takes into account the inequality $|g_i(\xi)|\leq1/\beta$, indicating that this region shrinks as $\beta\rightarrow \infty$.

	Let us present a polynomial-time numerical procedure to estimate the convergence rate for a given level set $\Omega{(r)}$ of the Lyapunov function. For given matrices $\bar{P}$ and $P_\perp$ defining the Lyapunov function~\eqref{eq:lyap} and a level $r>0$, one possible approach to obtain this estimate is described in the following two steps:
	\begin{enumerate}
		\item Find a suitable $\beta>0$ for the given level $r>0$, by solving the sum-of-squares (SOS) problem 
		\begin{equation}
		\max_{\phi_i(\xi),\psi_i(\xi)\in\Sigma^2(\xi),~\beta>0} \beta \qquad\mathrm{s.t.}
		\end{equation}
		\begin{equation}	\label{eq:sos_1}
		g_i(\xi)^2\!-\beta^{2}\!h_i(\xi)^2-\phi_i(\xi)\big(r-v(\xi)\big)  \in \Sigma^2(\xi),
		\end{equation}	
		\begin{equation}	\label{eq:sos_2}
		1\!-\!\beta^{2}g_i(\xi)^2-\psi_i(\xi)\big(r-v(\xi)\big)  \in \Sigma^2(\xi),
		\end{equation}
		for all $i\in\K_a(\lambda)$ where $\Sigma^2(\xi)$ is the set of SOS polynomials of $\xi\in\R^n$ and with $h_i(\xi)$ and $g_i(\xi)$ defined in the proof of Corollary~\ref{coro:coro01}. {Recalling the Putinar's Positivstellensatz \cite{putinar1993positive}, we can } verify that when $\xi\in\R^n$ is such that $v(\xi)\leq r$, ~\eqref{eq:sos_1} implies that $|g_i(\xi)|\geq\beta h_i(\xi)$  and~\eqref{eq:sos_2} implies that $|g_i(\xi)|\leq 1/\beta $.
		\item Search for $\epsilon\in[0,1]$ that provides the largest convergence rate $\alpha=s_{\min} (Q)/s_{\max} (P)$, where $Q$ and $P$ are given in~\eqref{eq:Q_and_P} and $\gamma(\lambda,\epsilon,i),\gamma(\lambda,-\epsilon,i)\in\Lambda$ for all $i\in\K_a(\lambda)$. This can be done by finding $G_\epsilon>0$ for each $\epsilon$ such that the inequalities~\eqref{eq:gamma_eps_plus}-\eqref{eq:gamma_eps_minus} hold for all $i\in\K_a(\lambda)$ and using it to evaluate $s_{\min} (Q)$, which can be efficiently done by semidefinite programming. Alternatively, imposing $G_\epsilon = \rho I>0$ allows us to obtain $-\rho$ directly from the maximum eigenvalue of the matrices on the left-hand side of~\eqref{eq:gamma_eps_plus}-\eqref{eq:gamma_eps_minus} for all $i\in\K_a(\lambda)$.
	\end{enumerate}

	The following example illustrates this procedure.
	\begin{example}
		{Consider the switched affine system~\eqref{eq:sys} given by $N=3$ marginally stable subsystems
		\begin{equation}
		A_1\!=\!\!
		\begin{bmatrix}
		\!-6 & 5 & 0 \\ 
		2 & \!-7 & 0 \\ 
		\!-2 & 0 & 0
		\end{bmatrix} \!\!,A_2\!=\!\!\begin{bmatrix}
		\!-6 & 2 & 0 \\ 
		2 & \!-7 & 0 \\ 
		2 & 3 & 0
		\end{bmatrix}\!\!,A_3\!=\!\!\begin{bmatrix}
		-\!3 & -\!1 & 0 \\ 
		-\!1 & -\!1 & 0 \\ 
		2 & -\!3 & 0
		\end{bmatrix} 
		\end{equation}
		\begin{equation}
		b_1=\begin{bmatrix}
		1 \\ 
		-1 \\ 
		0
		\end{bmatrix} \!,~b_2=\begin{bmatrix}
		-1 \\ 
		1 \\ 
		2
		\end{bmatrix}  \!,~b_3=\begin{bmatrix}
		0 \\ 
		0 \\ 
		-2
		\end{bmatrix}\!.
		\end{equation}
		For $\lambda=[\frac{1}{3}~\frac{1}{3}~\frac{1}{3}]'\in\Lambda$ we can verify that $x_e=0\in\R^3$ is a singular equilibrium point for the system and the design conditions in Theorem~\ref{theo:common} are satisfied for   $[\Vb ~~\Vp] = I$ and
		\begin{equation}
		\bar{P}\!=\!\begin{bmatrix}
		1.1989 & \!-0.0046 \\ 
		\!-0.0046 & 1.2087
		\end{bmatrix}\!\times \!10^{-3}\!,~P_\perp\!=\!
		1.1542 \times 10^{-3}
		\end{equation}
		which were obtained by minimizing the condition number of $P$ subject to $P>10^{-3} I$. As $\rank(ML)=1$, the requirements in Corollary~\ref{coro:coro01} are satisfied and the SOS-based procedure described in this section allows us to obtain a guaranteed convergence rate $\alpha$ such that~\eqref{eq:exp_convergence} holds within a sublevel set~\eqref{eq:level_set} of the Lyapunov function~\eqref{eq:lyap}. Searching for polynomials $\phi_i,\psi_i\in\Sigma^2(\xi)$ of degree 2, Figure~\ref{fig:conv_rate} shows the guaranteed rate $\alpha$ (in blue) obtained from our method with respect to the euclidean ball $\B(0,R)=\{\xi\in\R^{n}~:~\xi'\xi\leq R\}$ for several values of $R>0$, where the smallest level set of the Lyapunov function $\Omega(r)$ containing $\B(0,R)$ is given by $r=Rs_{\max}(P)$. For comparison, the largest $\alpha$ ensured by the LMI-procedure given in~\cite[Proposition~2]{hetel2014local} is depicted (in red). Note that, for this example,~\cite{hetel2014local} guarantees a faster convergence (i.e., larger values of $\alpha$) when $R$ is small but, for $R\geq1$ Corollary~\ref{coro:coro01} yields better values of $\alpha$. Also, for $R$ larger than $2.512$ in the given grid of points, feasible solutions could not be found under the procedure given in~\cite[Proposition~2]{hetel2014local}.
		\begin{figure}
			\centering
			\def\svgwidth{.5\linewidth}
			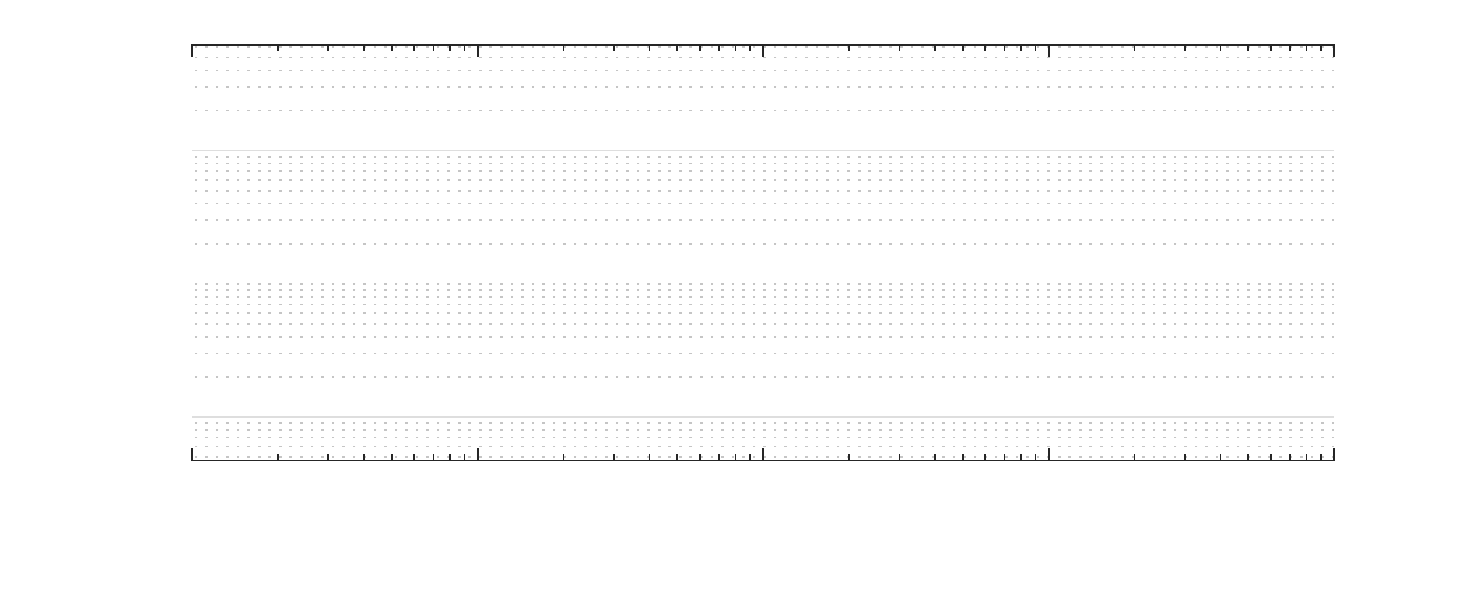
			\caption{Guaranteed convergence rate $\alpha$ obtained from the method in Corollary~\ref{coro:coro01} and by~\cite[Proposition~2]{hetel2014local} valid inside sets $\B(0,R)=\{\xi\in\R^{n}~:~\xi'\xi\leq R\}$ of squared radius $R>0$.}\label{fig:conv_rate}
		\end{figure}}
	\end{example}

	To conclude this section, we present some technical remarks that allow us to improve the estimate provided by this method.  First, testing for SOS polynomials {of a fixed degree} is known to be more conservative than testing for non-negativeness. However, polynomial-time algorithms for SOS programming are appealing in our context since optimizing over polynomials with non-negativeness as a constraint is also known to be NP-hard. Second, the average of the upper bounds calculated in~\eqref{eq:v_quad_local_upperbound} in the proof of Corollary~\ref{coro:coro01} could be any weighted average and better estimates for the convergence rate may be obtained by properly adjusting these weights, {which can be also done by semidefinite programming}. Finally, an index-dependent $\epsilon_i$ in~\eqref{eq:gamma_eps_plus}-\eqref{eq:gamma_eps_minus} can equally be adopted instead of an index-independent $\epsilon$.

	\section{Practical Example}
	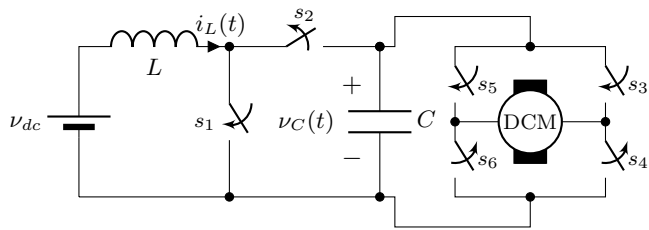
\begin{figure}[b]\centering\scalebox{1}{
			\begin{tikzpicture}[scale=2,american]
			\draw (0,0) to[battery2,invert,l={\footnotesize $\nu_{dc}$}] (0,1) to[L,-*,l_={\footnotesize$L$},i={\footnotesize$~~i_L(t)$}] (1,1) to[cspst,-*,l_={\scriptsize $s_1$}] (1,0) -- (0,0);
			\draw (1,1) to[ospst,-*,l={\scriptsize$s_2$}] (2,1) to[C,*-*,l={\footnotesize$C$},v={\footnotesize$\nu_C(t)\!\!\!$}] (2,0) -- (1,0);
			\draw (2,1) -| (2.1,1.2) -| (3,1) to[open,*-*] (3,0) |- (2.1,-.2) |- (2,0); 
			\draw (3,1) -- (3.5,1) to[cspst,-*,l={\scriptsize $s_3$}] (3.5,.5) to[ospst,l={\scriptsize $s_4$}] (3.5,0) --(3,0);
			\draw (3,1) -- (2.5,1) to[cspst,-*,l={\scriptsize $s_5$}] (2.5,.5) to[ospst,l={\scriptsize $s_6$}] (2.5,0) --(3,0);
			\draw (2.5,0.5) --(3,0.5) node[elmech]{\scriptsize DCM} --(3.5,0.5);
			\end{tikzpicture}}
		\caption{Circuit schematic of a boost dc-dc converter and an h-bridge feeding a dc motor.}\label{fig:dc_motor}
	\end{figure}
	In this {section}, we illustrate two of many possible applications of the presented theory. Consider the circuit represented in Figure~\ref{fig:dc_motor}, which schematizes a dc motor controlled by an h-bridge, characterized by the switches $(s_3,s_4,s_5,s_6)$. To step-up the input voltage $\nu_{dc}$, a boost converter with switches $(s_1,s_2)$ is implemented and connected to the h-bridge input. We investigated two problems of interest for this system, namely, the position control and velocity control with integral action.
	\subsection{Rotational position control}
	Neglecting the dc motor inductance and considering that each pair of switches $(s_1,s_2)$, $(s_3,s_4)$ and $(s_5,s_6)$ are alternately commanded, the dynamic model of this system is given as a switched affine system~\eqref{eq:sys} with matrices
	\begin{equation}
	A_i \!=\! \begin{bmatrix}
	-\frac{R_L}{L} 	& -\frac{u_1(i)}{L}			     		& 0   									& 0\\
	\frac{u_1(i)}{C}& -\frac{u_3(i)}{R_mC} 					& \frac{(2u_2(i)-1)u_3(i)K_e}{R_mC} 	& 0\\
	0	   			& \frac{(2u_2(i)-1)u_3(i)K_e}{JR_m}		& -\frac{K_e^2+cR_m}{JR_m}				& 0\\
	0				&	0									& 1										& 0 	
	\end{bmatrix}\label{eq:A_i_motor}
	\end{equation}
	and $b_i=[\nu_{dc}/L~ 0~ 0~ 0]'$ for all $i\in\K:=\{1,\dots,8\}$. The boolean variables $u_1(i)$, $u_2(i)$ and $u_3(i)$ take values according to Table~\ref{tab:bool_dcmotor} for given configurations of the switches $(s_1,\dots,s_6)$. The state vector is constructed as $x(t)=[i_L(t)~~\nu_C(t)~~\omega(t)~~\theta(t)]'$ where $i_L(t)$ and $\nu_C(t)$ are indicated in Figure~\ref{fig:dc_motor}, whereas $\omega(t)$ and $\theta(t)$ are the rotor shaft angular velocity and position, respectively. The remaining parameters have their values given in Table~\ref{tab:par_dcmotor}.
	\begin{table}\centering
		\caption{Values of the boolean variables $u_1(\sigma)$, $u_2(\sigma)$ and $u_3(\sigma)$ and switches $s_1$, $s_2$ and $s_3$ with respect to the active subsystem $\sigma\in\K$ ($s_1=1$ when $s_1$ is closed, and \textit{vice-versa}). }\label{tab:bool_dcmotor}
		\begin{tabular}{c||c|c|c|c|c|c|c|c}
			& \multicolumn{8}{|c}{$\sigma$}\\
			& 1 & 2 & 3 & 4 & 5 & 6 & 7 & 8\\\hline\hline
			$u_1(\sigma)$	& 0 & 1 & 0 & 1 & 0 & 1 & 0 & 1\\
			$u_2(\sigma)$	& 0 & 0 & 1 & 1 & 0 & 0 & 1 & 1\\
			$u_3(\sigma)$	& 0 & 0 & 0 & 0 & 1 & 1 & 1 & 1\\\hline
			$s_1$           & 1 & 0 & 1 & 0 & 1 & 0 & 1 & 0\\
			$s_3$           & 0 & 0 & 1 & 1 & 0 & 0 & 1 & 1\\
			$s_5$           & 0 & 0 & 1 & 1 & 1 & 1 & 0 & 0\\
		\end{tabular}
	\end{table}
	\begin{table}
		\centering
		\caption{Numerical values for the system parameters.}\label{tab:par_dcmotor}
		\scalebox{0.9}{\begin{tabular}{crc}
				Symbol 		& Quantity 					  			& Value 							\\\hline
				$R_L$  		& Resistance of the inductor $L$ 		& 0.5 $\Omega$ 						\\
				$L$    		& Inductance of $L$				  		& 1 $\times 10^{-3}$ H		   		\\
				$C$    		& Capacitance of the capacitor $C$		& 2 $\times 10^{-3}$ F      		\\
				$K_e$  		& Electric constant of the motor		& 5 $\times 10^{-3}$ V.s/rad		\\
				$R_m$		& Resistance of the motor winding		& 1 $\Omega$						\\
				$J$	   		& Inertia of the motor shaft			& 1 $\times 10^{-6}$  kg.m$^2$		\\
				$c$	   		& Viscous friction on the motor shaft	& 1 $\times 10^{-4}$  kg.s.m$^2$	\\
				$\nu_{dc}$	& Input voltage							& 12 V
		\end{tabular}}
	\end{table}
	
	This system has a shared nullspace $\mathcal{N}$ of $\dim(\mathcal{N})=m=1$ spanned by $V_\perp=[0~0~0~1]'$, which corresponds to the rotational position $\theta(t)$. {Consider $[\bar{V}~ V_\perp] = I$.  Notice that for all $\lambda\in\Lambda$, we have $\rank(A_{\lambda})=m<n=4$, as the forth column of $A_\lambda$ is always null. For the goal point $x_e = [0~~\nu_{Ce}~~0~~\theta_e]'$ with $\nu_{Ce}=24$~V and an arbitrary $\theta_e\in\R$}, we can verify that the conditions in Definition~\ref{def:singular_equilibrium}  stating that $x_e$ is a singular equilibrium point are fulfilled by $\lambda =[\frac{1}{4}~\frac{1}{4}~\frac{1}{4}~\frac{1}{4}~0~0~0~0]'\in\Lambda$. Recalling the result from Theorem~\ref{theo:common}, notice that the matrices \begin{equation}
	\bar{P}\!=\!\begin{bmatrix}
	1.4953 & 1.1691 & 0.0000 \\ 
	1.1691 & 3.7599 & 0.0000 \\ 
	0.0000 & 0.0000 & 1.2560
	\end{bmatrix}	\!\!\times\!\!10^{-3}\!\!, ~~P_\perp\!=\!2.0007,\nonumber
	\end{equation}
	satisfy the LMIs~\eqref{eq:lmi1}-\eqref{eq:lmi2}. Additionally, although $\lambda\notin{\rm Int}(\Lambda)$ in this case, we still have {$0\in{\rm Int} (\Pp)$ }as {$\mu =[\frac{1}{8}~\frac{1}{8}~\frac{1}{8}~\frac{1}{8}~\frac{1}{8}~\frac{1}{8}~\frac{1}{8}~\frac{1}{8}]'\in{\rm Int}(\Lambda)$ fulfills $M\ell_{\mu}=0$ and $\rank([M\ell_i]_{i\in\K})=m$, as discussed in Remark~\ref{rem:zeroinP}.} Then, we can consider the Lyapunov function~\eqref{eq:lyap}, which allows the implementation of the state-dependent switching function~\eqref{eq:switch_fun}, making $x_e$ globally asymptotically stable. Varying $\theta_e\in\{\pi,2\pi,-\pi, 0\}$ every 1~s, we obtained the state trajectories depicted in Figure~\ref{fig:state_motor}, evolving from null initial conditions. {Notice that, the fact that $x_e$ is in the nullspace of $A_\lambda$ allows us to perform an online update of $\theta_e\in\R$ while keeping the other components of $x_e$ constant, which characterizes the tracking control of a piecewise constant trajectory.} This simulation shows how the proposed switching function can successfully perform the position control of a dc motor fed by a switching circuit as {given} in Figure~\ref{fig:dc_motor}. In the following subsection another application of this theory is presented.
	
	\begin{figure}
		\centering
		\def\svgwidth{.55\linewidth}
		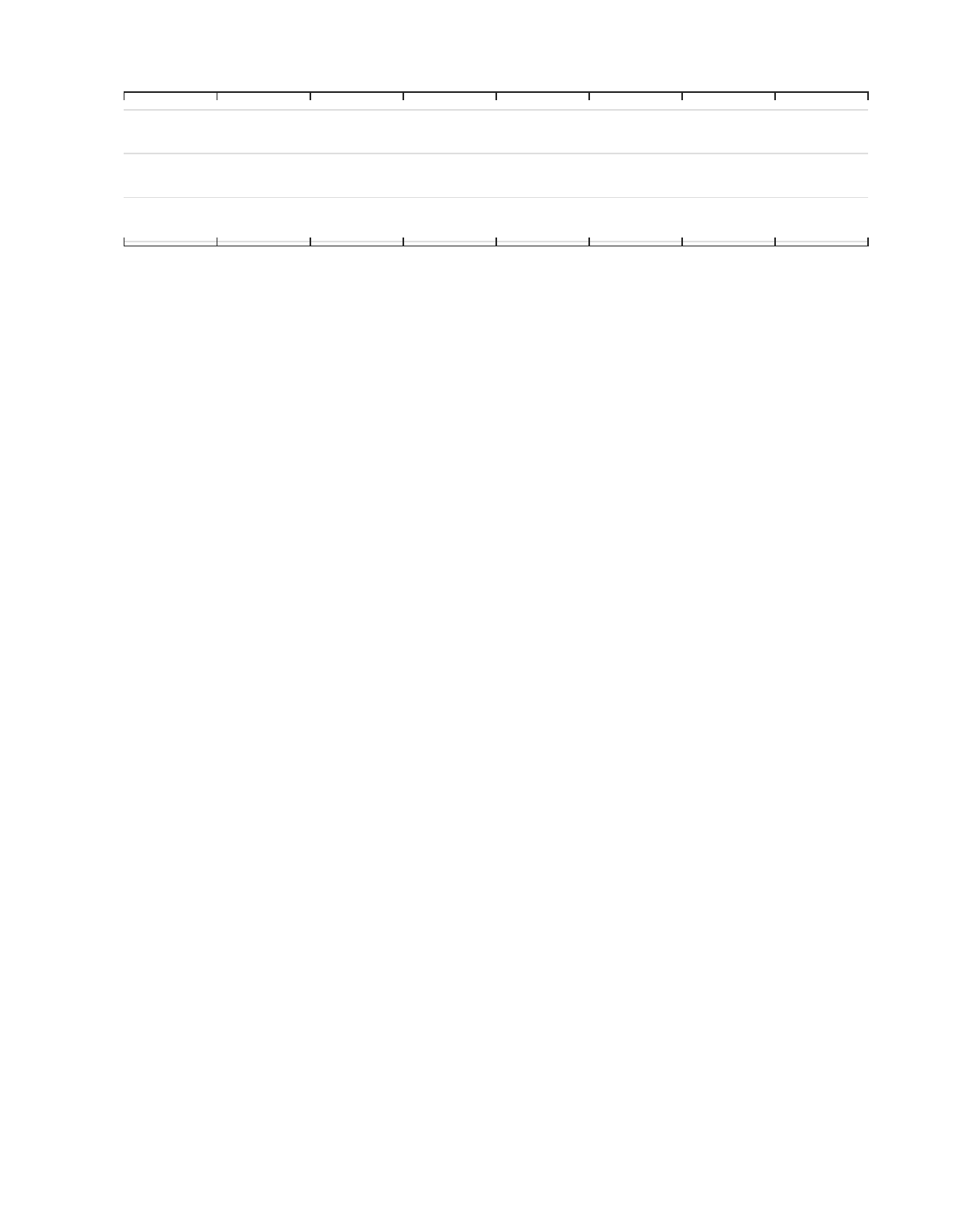
		\caption{Position control: State trajectories $i_L(t),~\nu_C(t),~\omega(t)$ and $\theta(t)$ of the converter and the dc motor and switching signal $\sigma(t)$ generated by the state-dependent switching function~\eqref{eq:switch_fun}. The position reference $\theta_e$ varies inside the set $\{\pi,2\pi,-\pi, 0\}$.}\label{fig:state_motor}
	\end{figure}

	\subsection{Velocity control with integral action}
	The proposed switching function is also able to control the rotational velocity of a dc motor with integral action, as discussed in Section~\ref{sec:stability}. To empirically investigate the robustness of our switching function, we consider the system given in Figure~\ref{fig:dc_motor} under an unknown external torque $\tau(t)$ actuating on the motor shaft. Considering a state variable  $x(t)= [i_L(t)~~\nu_C(t)~~\omega(t)~~\int_{0}^{t}(\omega(t)-\omega_e)dt]'$, the system with the integrator is given by matrices $A_i$ defined in~\eqref{eq:A_i_motor} and $b_i=[\nu_{dc}/L~ 0~ 0~ -\!\omega_e]',~i\in\K$. The vector $\lambda=[0~0~0~0~0~0~~0.625~~0.375]' \in\Lambda$ ensures that  $x_e=[7.6202~~29.9719~~200.0034~~0]'$ is a singular equilibrium point for this system, which is associated with a rotational velocity of $\omega_e\approx200$ rad/s. Conditions of Theorem~\ref{theo:common} are satisfied for this system considering 
	\begin{equation}
	\bar{P} = \begin{bmatrix}
	0.0142 & 0.0057 & 0.0068 \\ 
	0.0057 & 0.0108 & 0.0027 \\ 
	0.0068 & 0.0027 & 0.0048
	\end{bmatrix},~	
	P_\perp = 18.7476.
	\end{equation}
	Applying a piecewise constant disturbance $\tau(t)$ to the system evolving from $x(0)=0$, we obtained the trajectories given in Figure~\ref{fig:state_motor_vel}.
	
	\begin{figure}
		\centering
		\def\svgwidth{.55\linewidth}
		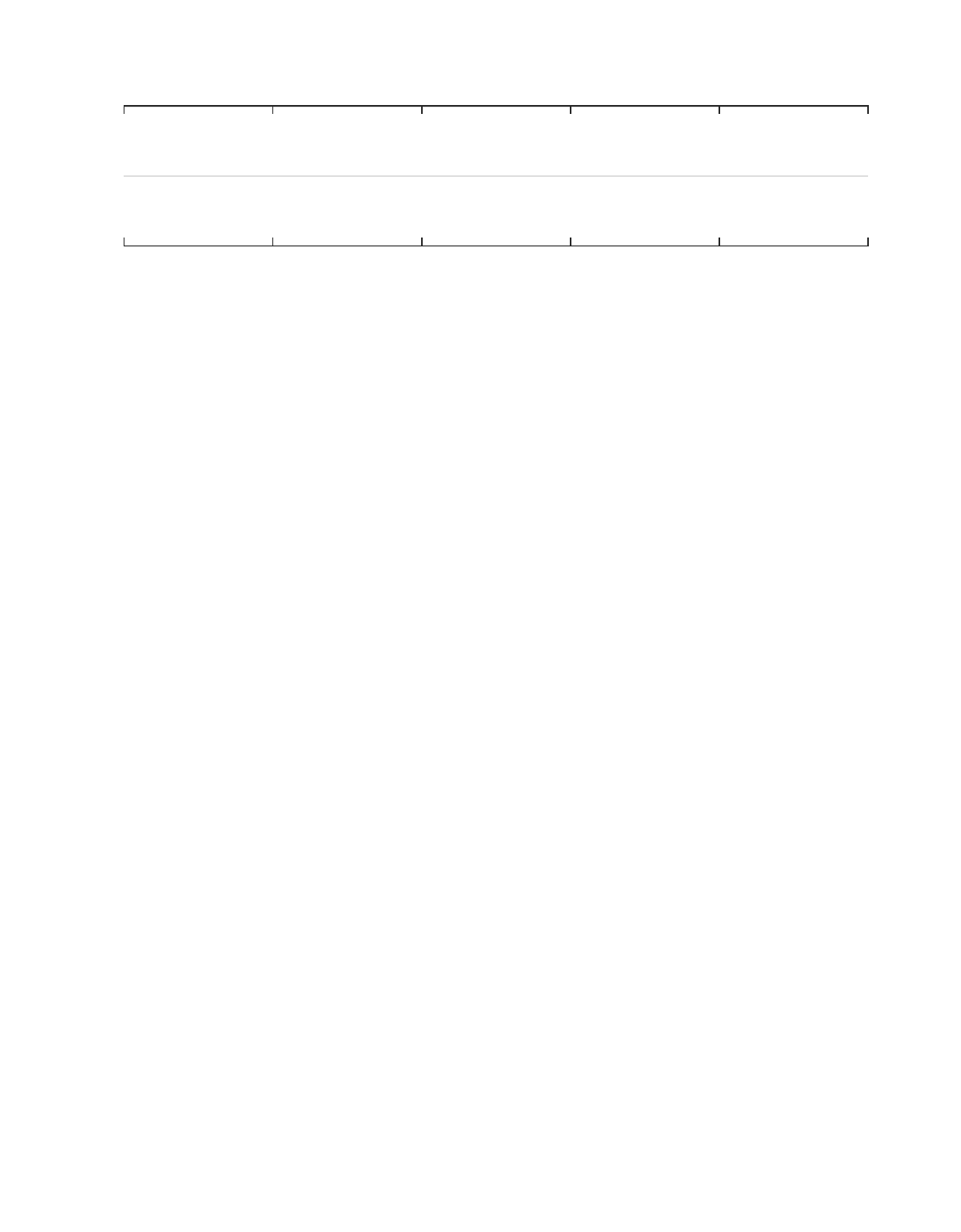
		\caption{Velocity control (integral action): State trajectories $i_L(t),~\nu_C(t)$ and $\omega(t)$ of the converter and the dc motor and switching signal $\sigma(t)$ generated by the state-dependent switching function~\eqref{eq:switch_fun}. The velocity reference $\omega_e=200$ rad/s is constant and the external disturbance $\tau(t)$ varies according to the profile.}\label{fig:state_motor_vel}
	\end{figure}
	These figures indicate that the proposed switching function may be used to incorporate integral action to the switching control of switched affine systems, which rejects the given external disturbance. Once more, we highlight that this is an empirical investigation and further analyses should be subject to future work, particularly on the estimation of bounds on the admissible disturbances.
	
	\section{Conclusion}
	Design conditions based on LMIs for switching function that globally asymptotically stabilizes singular equilibrium points of switched affine systems were developed in this work. The connections of {our results} with preexisting ones were discussed. In summary, we studied the case where the desired equilibrium point is the equilibrium of a convex combination of the dynamic {matrices that is rank-deficient}. A procedure based on SOS optimization was presented to estimate a local exponential convergence rate. Academical examples illustrated theoretical aspects of the theory and a practical application was presented, where the position and velocity control of a dc-motor fed by a boost converter and an h-bridge was carried out.

	\begin{ack}
		This research was supported by the ``National Council for Scientific and
		Technological Development (CNPq)'', under grant 303499/2018-4.
		
		R. Jungers  is a FNRS honorary Research Associate. This project has received funding from the European Research Council (ERC) under the \emph{European Union's Horizon 2020 research and innovation programme} under grant agreement No 864017 - L2C. RJ is also supported by the Innoviris Foundation and the FNRS (Chist-Era Druid-net).         
		
		For the SOS parser~\cite{Lofberg2009}, we thank the YALMIP team.
	\end{ack}
	
	\bibliographystyle{IEEEtran}
	\bibliography{refs}
	
	\appendix
	\section{Proof of Lemma~\ref{lemma:defective}}
	\label{app:proof_defective}
	\begin{proof}
		Consider the decomposition $X= \Vb\Vb'X +\Vp\Vp'X$, which leads to the equality
		\begin{equation}
		X^2 = \Vb\Vb'X\Vb\Vb'X +\Vp\Vp'X\Vb\Vb'X .\label{eq:proof_lemma_X2}
		\end{equation} 
		First, let us show that statement \textit{(b)} is equivalent to say that $\rank{(X^2)}<\rank{(X)}$. Indeed, let the Jordan Canonical Form $X=TJT^{-1}$ which implies that $\rank(J)= \rank(X)$ and $\rank(J^2)= \rank(X^2)$ given that $T$ is regular. According to~\cite[p. 168]{horn2012matrix}, $\rank{(X^2)}\leq\rank{(X)}-1$ if and only if at least one of the Jordan blocks has {dimension greater} than $1$ and is nilpotent, i.e., is associated with null eigenvalue. This is the case of $X$ due to the validity of statement \textit{(b)} and, therefore,  $\rank{(X^2)}<\rank{(X)}$.\\
		
		Now, let us first prove \textit{(a)}$\Rightarrow$\textit{(b)}. The singularity of $\Vb' X \Vb$ implies that there exists a non-zero $q\in\R^{p}$ such that $\Vb' X \Vb q=0$. Hence, the nullspace of $X^2$ is spanned by the columns of $\Vp$ together with the orthogonal vector $\Vb q$, which can verify in~\eqref{eq:proof_lemma_X2} that $X^2\Vb q=0$. Thus,  $\rank{(X^2)}<\rank{(X)}$ and $X$ is defective with $0$ as a defective eigenvalue.
		
		To demonstrate that \textit{(b)}$\Rightarrow$\textit{(a)}, assume $\rank{(X^2)}<\rank{(X)}$. Therefore, additionally to the nullspace spanned by $\Vp$, there must exist a non-zero orthogonal vector, for instance, $\Vb q $ with $q\in\R^{p}$ such that $X^2\Vb q= 0$. From~\eqref{eq:proof_lemma_X2}, we obtain that $X^2\Vb q= 0$ implies that 
		\begin{equation}
		\bar{q}:=(\Vb'X\Vb)^2 q =0 \quad \text{and} \quad q_\perp:=\Vp'X\Vb\Vb'X\Vb q= 0
		\end{equation}
		given that $X^2\Vb q=\Vb\bar{q} + \Vp q_\perp $ and that these two terms are orthogonal to each other. However, $\bar{q}=0$ implies that  $\Vb'X\Vb$ is singular. This concludes the proof.
	\end{proof}

		\section{Nonexistence of global exponential convergence rate} \label{app:proof_violated}
	
	Consider the error system~\eqref{eq:err_sy} having a shared nullspace $\mathcal{N}$, as considered in Theorem~\ref{theo:common}, and with affine terms $\ell_i\in\mathcal{N},~\forall i\in\K$. For this system, whenever the initial condition is taken as $\xi_0\in\mathcal{N}$, the solution $\xi(t)$  remains inside the nullspace $\mathcal{N}$ for all $t\geq 0$, given that $A_{\sigma(t)}\xi(t) =0$. Therefore, the right derivative of the state trajectory $\partial_{+} \xi(t)$ has its euclidean norm bounded by the affine terms as $\|\partial_{t+} \xi(t)\|\leq \max_{i\in\K} \|\ell_i\| =:\bar \ell$. By definition (see \cite{khalil2002nonlinear}), assuming global exponential stability implies that there must exists $c\geq1$ and $\alpha>0$ such that
	\begin{equation}
	\|\xi(t)\|\leq c\|\xi_0\|e^{-\alpha t}\label{eq:exp_stab_def}
	\end{equation}
	holds for all $\xi_0\in\R^n$ and $t\geq0$. Thus, we have
	\begin{align}
	0&\leq c\|\xi_0\|e^{-\alpha t}-\|\xi(t)\|\nonumber\\
	&= c\|\xi_0\|e^{-\alpha t}-\left\|\xi_0 + \int_{0}^{t}\partial_{+} \xi(\tau)d\tau\right\|\nonumber\\
	&\leq c\|\xi_0\|e^{-\alpha t}-\|\xi_0\|+\left\|\int_{0}^{t}\partial_{+} \xi(\tau)d\tau\right\|\nonumber\\		
	&\leq \|\xi_0\|(ce^{-\alpha t}-1)+ \int_{0}^{t}\left\|\partial_{+} \xi(\tau)\right\|d\tau\nonumber\\		
	&\leq  \|\xi_0\|(ce^{-\alpha t}-1)+\bar\ell t\label{eq:app_last_ineq_viol}
	\end{align}
	where the second inequality follows from the reverse triangle inequality.	However, for any scalars $c\geq 1,~\alpha>0$ and $\bar\ell>0$, one can always find finite values for $\|\xi_0\|>0$ and $t>0$ such that \eqref{eq:app_last_ineq_viol} is violated. For instance, we can easily verify that $\|\xi_0\|(ce^{-\alpha t}-1)+\bar\ell t= -1$ for $t=\ln(2c)/\alpha$ and any $\xi_0$ such that $\|\xi_0\|= 2(\bar\ell\ln(2c)/\alpha +1)$. Therefore, there cannot exist  $c\geq1$ and $\alpha>0$ for which~\eqref{eq:exp_stab_def} holds for all $\xi_0\in\R^{n}$ and $t\geq 0$.

	\section{Proof of Corollary~\ref{coro:coro01}}\label{app:B}
	\begin{proof}
		Assume that the conditions in Theorem~\ref{theo:common} hold. Therefore, the strict inequality in~\eqref{eq:lmi1} implies that there exist a scalar $\epsilon>0$ small enough  and a matrix $G_\epsilon>0$ such that~\eqref{eq:gamma_eps_plus}-\eqref{eq:gamma_eps_minus} hold for  all $i\in\K_a(\lambda)$ and $\gamma(\lambda,\epsilon,i),\gamma(\lambda,-\epsilon,i)\in \Lambda$. Thus, for each $i\in\K_a(\lambda)$, the upper bound~\eqref{eq:upper_lim_v_dot} evaluated for $\mu=\gamma(\lambda,\epsilon,i)$ and $\mu=\gamma(\lambda,-\epsilon,i)$ yields two upper bounds
		\begin{align}
		\dot{v}(\xi)& \leq (1-\epsilon)f_\lambda(\xi) +\epsilon f_i(\xi)\\
		\dot{v}(\xi)& \leq (1+\epsilon)f_\lambda(\xi) -\epsilon f_i(\xi)
		\end{align}
		which, recalling the definition of $f_i(\xi)$ in~\eqref{eq:v_dot}, are respectively equivalent to
		\begin{align}
		\dot{v}(\xi)& \leq 2\bar{\xi}'\left((\bar{P}\Vb' +{{P}_\times} {\Vp'})A_{\gamma(\lambda,i,\epsilon)} \Vb \right)\bar{\xi} +2\epsilon g_i(\xi)\nonumber\\
		\dot{v}(\xi)& \leq 2\bar{\xi}'\left((\bar{P}\Vb' +{{P}_\times} {\Vp'})A_{\gamma(\lambda,i,-\epsilon)} \Vb \right)\bar{\xi} -2\epsilon g_i(\xi)\nonumber
		\end{align}
		with $g_i(\xi)$ given in~\eqref{eq:g_i}, where $A_\lambda \Vp=0$, $U_\lambda=0$ and $\ell_\lambda=A_\lambda x_e +b_\lambda=0$ were once more used. 
		Combining these inequalities together and recalling~\eqref{eq:gamma_eps_plus}-\eqref{eq:gamma_eps_minus}, we have
		\begin{equation}\label{eq:abs}
		\dot{v}(\xi) \leq -\bar\xi'G_\epsilon\bar\xi -2\epsilon |g_i(\xi)|.
		\end{equation}
		Notice that, as a property of the absolute value function, the inequality
		\begin{equation}
		|g_i(\xi)| \geq  \beta \big(g_i(\xi)\big)^2
		\end{equation}
		holds for all $\xi$ such that $|g_i(\xi)|\leq1/\beta$ with $\beta>0$. 
		Also, the Taylor series expansion of $\xi \mapsto\big(g_i(\xi)\big)^2$ around $\xi=0$ yields
		\begin{equation}
		\big(g_i(\xi)\big)^2 = h_i(\xi)+ O^3(\xi)
		\end{equation}
		with the quadratic $h_i(\xi)$ given in~\eqref{eq:h_i} and  $O^3(\xi)$ denotes a sum of terms of $\xi$ with total order equal to or greater than three. Therefore, there exists a bounded region $\tilde \Omega_i$, defined in~\eqref{eq:omega_tilde} by all $\xi\in\R^n$ such that
		\begin{equation}\label{eq:gi_hi}
		|g_i(\xi)| \geq\beta h_i(\xi)
		\end{equation}
		for which the inequality~\eqref{eq:abs} yields
		\begin{equation}
		\dot v(\xi)\leq -\bar{\xi}'G_\epsilon\bar{\xi} - 2\epsilon\beta h_i(\xi) . \label{eq:v_upperbound_cost}
		\end{equation}
		The region $\tilde{\Omega}_i$  is non-empty because $g_i(0)=0$ and $h_i(0)=0$, which implies that it contains the origin $\xi=0$. Also, this region is not a singleton because {the inequality~\eqref{eq:gi_hi} that defines $\tilde{\Omega}_i$} is satisfied for any point $\xi$ arbitrarily close to the origin, given that the linear terms of the left-hand side dominate the quadratic one in the right-hand side. In special, when $\xi$ is such that {the} linear terms of $g_i(\xi )$ yield $\bar{\xi}'\bar{S}\ell_i +\xi_{\perp}'S_\perp\ell_i=0$, the right-hand side of the inequality is also null, given the definition~\eqref{eq:h_i}.
		
		Now, let a sublevel set $\Omega$ of the Lyapunov function $v(\xi)$ be included in $\mathcal{X}$, which is defined in~\eqref{eq:region_omega}. By definition, the upper bound~\eqref{eq:v_upperbound_cost} holds for all $i\in\K_a(\lambda)$ inside $\mathcal{X}$. Therefore, we have 
		\begin{align}
		\dot v(\xi(t))
		&\leq \!-\bar{\xi}'G_\epsilon\bar{\xi}\! -\! \sum_{i\in \K_a(\lambda)}\frac{2\epsilon\beta}{|\K_a(\lambda)|} h_i(\xi)\nonumber\\
		&=\!\!-\!\!\begin{bmatrix}
		\bar{\xi} \\ \xi_\perp
		\end{bmatrix}'\!\!\!\left(\begin{bmatrix}
		G_\epsilon&0\\
		0&0
		\end{bmatrix}\!+\! \frac{\epsilon\beta}{|\K_a(\lambda)|}\!\begin{bmatrix}
		\bar{S}L\\S_\perp L
		\end{bmatrix}\!\! \begin{bmatrix}
		\bar{S}L\\S_\perp L
		\end{bmatrix}'\right)\!\!\begin{bmatrix}
		\bar{\xi} \\ \xi_\perp
		\end{bmatrix} \nonumber\\
		&<0\label{eq:v_quad_local_upperbound}
		\end{align}
		where the first inequality averages the upper bounds in~\eqref{eq:v_upperbound_cost} for all $i\in\K_a(\lambda)$. Finally, the last one holds for all nonzero $(\bar{\xi},\xi_\perp)$ as for $\bar{\xi}\neq0$ the first augmented matrix yields $\bar{\xi}'G_\epsilon\bar{\xi}>0 $ and the second one, a non-negative value, whereas for $\bar{\xi}=0$, the second matrix yields $\xi_\perp'S_\perp LL'S_\perp'\xi_\perp>0$, which is a consequence from the fact that $S_\perp L = P_\perp ML$ has full rank. This has been ensured by Theorem~\ref{theo:common}{,~through the condition $0\in{\rm Int}(\Pp)$}. Hence, we demonstrate, by the comparison lemma~\cite[p.~102]{khalil2002nonlinear} that local exponential stability holds inside $\Omega$. 
	\end{proof}
	\cv
\end{document}